\documentclass[11pt]{article}
\usepackage[margin=1in]{geometry}  
\usepackage{graphicx}              
\usepackage{amsmath, amssymb, amsthm, epsfig}               
\usepackage{enumerate}
\usepackage{amsfonts}              
\usepackage{amsthm}                
\usepackage{amsthm}
\usepackage{enumerate}
\theoremstyle{plain}
\usepackage{color}
\usepackage{hyperref}
\hypersetup{colorlinks,citecolor=blue}
\usepackage{cite}
\pretolerance=10000

\newtheorem{corollary}{Corollary}[section]
\newtheorem{definition}[corollary]{Definition}

\newtheorem{lemma}[corollary]{Lemma}
\newtheorem{prop}[corollary]{Proposition}
\newtheorem{rem}[corollary]{Remark}
\newtheorem{thm}[corollary]{Theorem}

\newfont{\sBlackboard}{msbm10 scaled 1200}

\newcommand{\mylabel}[1]{\label{#1}
    \ifx\undefined\stillediting
    \else \fbox{$#1$}\fi }
\newcommand{\BE}{\begin{equation}}

\newcommand{\EEQ}{\end{equation}}
\newcommand{\rfb}[1]{\mbox{\rm
        (\ref{#1})}\ifx\undefined\stillediting\else:\fbox{$#1$}\fi}

\newfont{\Blackboard}{msbm10 scaled 1200}

\newfont{\roma}{cmr10 scaled 1200}

\newcommand{\bb}{\begin{equation}}
\newcommand{\bbb}{\end{equation}}

\newcommand{\mm}    {{\hbox{\hskip 0.5pt}}}

\newcommand{\bluff} {{\hbox{\raise 15pt \hbox{\mm}}}}
%

%

%
 
scaled\magstep2

\makeatletter
\def\section{\@startsection {section}{1}{\z@}{-3.5ex plus -1ex minus
        -.2ex}{2.3ex plus .2ex}{\large\bf}}
\makeatother
\numberwithin{equation}{section}


\begin{document}
\title{On the Schr\"odinger-Bopp-Podolsky system: Ground state \& least energy nodal solutions with nonsmooth nonlinearity}
\author{Anouar Bahrouni\footnote{
Anouar.Bahrouni@fsm.rnu.tn;\ bahrounianouar@yahoo.fr}\ \ and \ Hlel
Missaoui\footnote{
hlel.missaoui@fsm.rnu.tn;\ hlelmissaoui55@gmail.com}\\
Mathematics Department, Faculty of Sciences, University of Monastir,\\ 5019 Monastir, Tunisia}
\maketitle


\begin{abstract}
In this paper, the following Schr\"odinger-Bopp-Podolsky system is studied
\begin{equation*}
\left\lbrace
\begin{array}{lll}
   -\triangle u+V(x)u+q^2\phi u&=f(x,u),  \ \ &\text{in}\ \mathbb{R}^3,  \\
   \ & \ & \ \\
   -\triangle\phi+a^2\triangle^2 \phi&=4\pi u^2,\ \ &  \text{in}\ \mathbb{R}^3,
\end{array}
\right.
\end{equation*}
where  $a>0$ and $q\neq 0$. Under suitable assumptions on $f$ and $V$, by using minimizations arguments and generalized subdifferential, the existence of a ground state with a fixed sign  and a least energy nodal solutions for this system are
obtained. Moreover, we prove that the energy of the nodal solution is twice as large as that of the ground state solution.
\end{abstract}

{\small \textbf{Keywords:} Schr\"odinger-Bopp-Podolsky System, Ground state solution, Nodal solutions, Nehari  method.} \\
{\small \textbf{2010 Mathematics Subject Classification:} 35J50, 35J48, 35Q60}


\section{Introduction}
This paper was motivated by some works that have appeared in recent years concerning the  nonlinear Schr\"odinger-Bopp-Podolsky system of the
type
\begin{equation}\label{BPS}
\left\lbrace
\begin{array}{lll}
   -\triangle u+V(x)u+q^2\phi u&=f(x,u),  \ \ &\text{in}\ \mathbb{R}^3,  \\
   \ & \ & \ \\
   -\triangle\phi+a^2\triangle^2 \phi&=4\pi u^2,\ \ &  \text{in}\ \mathbb{R}^3,
\end{array}
\tag{$\mathcal{BPS}$}
\right.
\end{equation}
where $u,\phi:\mathbb{R}^3\rightarrow \mathbb{R}$, $a>0$ is the Bopp-Podolsky parameter, $q\neq 0$, $V:\mathbb{R}^3\rightarrow \mathbb{R}_+$ is the potential function
 and  $f:\mathbb{R}^3\times \mathbb{R}\rightarrow \mathbb{R}$ is a carath\'eodory function.\\

To the best of our knowledge, this new kind of elliptic system \eqref{BPS} was introduced recently for the first time
in \cite{jde} by P. d'Avenia and G. Siciliano, although the problem was known among  physicists. The system \eqref{BPS} was named Schr\"odinger-Bopp-Podolsky system because it appears when we couple a Schr\"odinger field   $\psi(x,t)$ with
its electromagnetic field in the Bopp-Podolsky electromagnetic theory. The
Bopp-Podolsky theory, see \cite{bopp,podl},  is a second-order gauge theory for the electromagnetic field. It was introduced to solve the so-called "infinity problem" that appears in the classical Maxwell theory. In fact, by the well-known Poisson equation (or Gauss law), the electrostatic potential $\phi$ for a given charge distribution whose density is $\rho$ satisfies the equation
\begin{equation}\label{1.2}
    -\triangle \phi=\rho,\ \ \text{on}\  \mathbb{R}^3.
\end{equation}
If $\rho=4\pi\delta_{x_0}$, ($x_0\in \mathbb{R}^3$), then $\mathcal{G}(x-x_0)$, with $\displaystyle{\mathcal{G}(x):=\frac{1}{\vert x\vert}}$, is the fundamental solution
of \eqref{1.2} and $$\displaystyle{\mathcal{E}_M}(\mathcal{G}):=\frac{1}{2}\int_{\mathbb{R}^3}\vert \nabla\mathcal{G} \vert^2dx=+\infty$$
its electrostatic energy. Thus, in the Bopp-Podolsky theory, the   equation \eqref{1.2} is replaced by
\begin{equation}\label{1.1}
-\triangle\phi+a^2\triangle^2 \phi=\rho,\ \ \text{on}\  \mathbb{R}^3.
\end{equation} Therefore, In this case, if $\rho=4\pi\delta_{x_0}$, ( $x_0\in \mathbb{R}^3$), we are able to know explicitly the solution of the  equation \eqref{1.1}
and to see that its energy is finite or not. Fortunately, in \cite{jde}  P. d'Avenia and G. Siciliano proved that
 $\mathcal{K}(x-x_0)$ with, $\displaystyle{\mathcal{K}(x):=\frac{1-e^{-\frac{\vert x\vert}{a}}}{\vert x\vert}}$, is the fundamental solution of the equation
$$-\triangle\phi+a^2\triangle^2 \phi=4\pi\delta_{x_0},\ \ \text{on}\  \mathbb{R}^3.$$
The solution of the previous equation  has no singularity in $x_0$ since it satisfies
$$\lim\limits_{x\rightarrow x_0}\mathcal{K}(x-x_0)=\frac{1}{a},$$
and its energy is
$$\mathcal{E}_{BP}(\mathcal{K}):=\frac{1}{2}\int_{\mathbb{R}^3}\vert \nabla \mathcal{K}\vert^2dx+\frac{a^2}{2}\int_{\mathbb{R}^3}\vert \triangle \mathcal{K}\vert^2dx<+\infty.$$
For more details about this subject see \cite[Section 2]{jde}.\\

An important fact involving system \eqref{BPS} is that this class of system can be transformed into a
Schr\"odinger equation with a nonlocal term (see\cite{jde}), which allows us to use variational
methods. Effectively, in \cite{jde}, P. d'Avenia and G. Siciliano, by the Lax-Milgram Theorem, proved that for a given $u\in H^1(\mathbb{R}^3)$, there exists a
unique $\phi_u\in \mathcal{D}$ such that
$$-\triangle\phi_u+a^2\triangle^2 \phi_u=4\pi u^2,\ \text{in}\ \mathbb{R}^3,$$
where $\mathcal{D}$ is the completion of $C_0^{\infty}(\mathbb{R}^3)$ with respect to the norm $\Vert \cdot\Vert_{\mathcal{D}}$ induced by the scalar product
$$\langle \psi,\varphi\rangle_{\mathcal{D}}:=\int_{\mathbb{R}^3}\nabla \psi\nabla\varphi+a^2\triangle\psi\triangle\varphi dx.$$
Otherwise, $$\mathcal{D}:=\left\lbrace \phi\in \mathcal{D}^{1,2}(\mathbb{R}^3):\ \triangle \phi\in L^2(\mathbb{R}^3)\right\rbrace,$$
with
$$\mathcal{D}^{1,2}(\mathbb{R}^3):=\left\lbrace \phi\in L^{6}(\mathbb{R}^3):\ \nabla \phi\in L^2(\mathbb{R}^3)\right\rbrace.$$
The space $\mathcal{D}$ is an Hilbert space continuously embedded into $\mathcal{D}^{1,2}(\mathbb{R}^3)$, $L^6(\mathbb{R}^3)$ and $L^\infty (\mathbb{R}^3).$

Moreover, they proved that the unique solution $\displaystyle{\phi_u:=\mathcal{K}*u^2=\frac{1-e^{-\frac{\vert x\vert}{a}}}{\vert x\vert}*u^2}$, for all $u\in H^1(\mathbb{R}^3)$  verifies the following properties
\begin{lemma}[see \cite{jde}]\label{lem1}\ \\
For any $u\in H^1(\mathbb{R}^3)$, we have:
\begin{enumerate}
    \item[$(1)$] for every $y\in \mathbb{R}^3$, $\phi_{u(\cdot+y)}=\phi_{u}(\cdot+y)$;
    \item[$(2)$] $\phi_u\geq 0;$
    \item[$(3)$] for every $r\in(3,+\infty]$,\ $\phi_u\in L^r(\mathbb{R}^3)\cap C_0(\mathbb{R}^3)$;
    \item[$(4)$]for every $r\in(\frac{3}{2},+\infty]$, $\nabla\phi_u=\nabla\left(\frac{1-e^{-\frac{\vert x\vert}{a}}}{\vert x\vert}\right)*u^2\in L^{r}(\mathbb{R}^3)\cap C_0(\mathbb{R}^3)$;
    \item[$(5)$]$\phi_u\in\mathcal{D}$;
    \item[$(6)$]$\Vert \phi_u\Vert_6\leq C\Vert u\Vert^2$;
    \item[$(7)$] $\phi_u$ is the unique minimizer of the functional
    $$E(\phi):=\frac{1}{2}\Vert \nabla \phi\Vert^2_2+\frac{a^2}{2}\Vert \triangle \phi\Vert^2_2-\int_{\mathbb{R}^3}\phi u^2dx,\ \ \text{for all}\ \phi \in\mathcal{D};$$
    \item[$(8)$]if $v_n\rightharpoonup v$ in $H^1(\mathbb{R}^3)$, then $\phi_{v_n}\rightharpoonup \phi_v$ in $\mathcal{D}$ and $\displaystyle{\int_{\mathbb{R}^3}\phi_{u_n}u_n^2 dx\rightarrow \int_{\mathbb{R}^3}\phi_{u}u^2 dx };$
   \item[$(9)$] $\phi_{tu}=t^2\phi_u$, for all $t\in \mathbb{R}_+$;
   \item[$(10)$]$\displaystyle{\int_{\mathbb{R}^3}\phi_u u^2dx=\int_{\mathbb{R}^3}\int_{\mathbb{R}^3}\frac{1-e^{-\frac{\vert x-y\vert}{a}}
   }{\vert x-y\vert}u(x)^2u(y)^2dxdy\leq \frac{1}{a}\Vert u\Vert_2^4}.$
\end{enumerate}
\end{lemma}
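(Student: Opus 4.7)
The plan is to exploit the explicit representation $\phi_u = \mathcal{K} \ast u^2$ with $\mathcal{K}(x) = (1-e^{-|x|/a})/|x|$, combined with Lax--Milgram on the Hilbert space $\mathcal{D}$ and elementary harmonic analysis. Items $(1)$, $(2)$, $(9)$, $(10)$ are essentially immediate: convolution commutes with translation, both $\mathcal{K} \geq 0$ and $u^2 \geq 0$, $\phi_{tu} = \mathcal{K} \ast (tu)^2 = t^2 \phi_u$, and the double-integral formula in $(10)$ is Fubini while the bound follows from the pointwise estimate $\mathcal{K}(z) \leq 1/a$, itself a consequence of $1 - e^{-s} \leq s$ at $s = |z|/a$.

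For the integrability items $(3)$ and $(4)$, I would observe that $\mathcal{K}$ is bounded by $1/a$ near the origin (Taylor expansion of the exponential) and decays like $1/|z|$ at infinity, whence a direct radial integration gives $\mathcal{K} \in L^r(\mathbb{R}^3)$ for every $r \in (3, \infty]$. A similar analysis shows $\nabla \mathcal{K}$ is bounded near zero and decays like $1/|z|^2$ at infinity, so $\nabla \mathcal{K} \in L^r$ for $r > 3/2$. Since $u \in H^1(\mathbb{R}^3)$ yields $u^2 \in L^s(\mathbb{R}^3)$ for all $s \in [1,3]$ by Sobolev, appropriate exponents in Young's inequality $L^r \ast L^s \hookrightarrow L^t$ deliver $(3)$ and $(4)$; the $C_0$-statements follow by approximating $u^2$ in $L^s$ by $C_c^\infty$ functions and passing to the uniform limit of the convolution.

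Items $(5)$ and $(7)$ come from one Lax--Milgram application: $\langle \cdot, \cdot \rangle_\mathcal{D}$ is the inner product on $\mathcal{D}$, and $\varphi \mapsto \int \varphi u^2$ is continuous on $\mathcal{D}$ by H\"older and $\mathcal{D} \hookrightarrow L^6$, so there is a unique $\phi_u \in \mathcal{D}$ solving the weak fourth-order equation, and it coincides with the unique minimizer of the strictly convex, coercive functional $E$. For $(6)$ I would test this weak formulation against $\phi_u$ itself to obtain $\|\phi_u\|_\mathcal{D}^2 = \int \phi_u u^2 \leq \|\phi_u\|_6 \|u\|_{12/5}^2$, and then combine $\|\phi_u\|_6 \leq C\|\phi_u\|_\mathcal{D}$ (from $\mathcal{D} \hookrightarrow L^6$) with $\|u\|_{12/5} \lesssim \|u\|_{H^1}$.

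The main obstacle is $(8)$. Given $v_n \rightharpoonup v$ in $H^1$, bound $(6)$ shows $\{\phi_{v_n}\}$ is bounded in $\mathcal{D}$, so along a subsequence $\phi_{v_n} \rightharpoonup \psi$ in $\mathcal{D}$. To identify $\psi = \phi_v$, I would test the weak formulation against arbitrary $\varphi \in C_c^\infty(\mathbb{R}^3)$: Rellich compactness gives $v_n \to v$ in $L^2_{\rm loc}$ so $\int v_n^2 \varphi \to \int v^2 \varphi$, while the left-hand side passes by weak convergence in $\mathcal{D}$; uniqueness of the Lax--Milgram solution then removes the subsequence extraction. The genuinely technical step is $\int \phi_{v_n} v_n^2 \to \int \phi_v v^2$: using a cutoff I would split the integral over $B_R$ and $B_R^c$, treat the near-field with local compactness (Rellich for $v_n$ in $L^2_{\rm loc}$ and local $H^1$-bounds on $\phi_{v_n}$ inherited from $\mathcal{D} \hookrightarrow H^1_{\rm loc}$), and control the tail uniformly via the $L^r$-decay of $\phi_{v_n}$ from $(3)$ against the bounded $L^s$-mass of $v_n^2$, letting $R \to \infty$ at the end.
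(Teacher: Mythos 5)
The paper itself contains no proof of this lemma: it is imported verbatim from d'Avenia and Siciliano \cite{jde}, so there is no in-paper argument to compare against. Your sketch follows the same route as the original source (explicit kernel $\mathcal{K}(x)=(1-e^{-|x|/a})/|x|$, Young's inequality for the integrability statements, Lax--Milgram on $\mathcal{D}$ for existence, uniqueness and the variational characterization), and items $(1)$--$(7)$, $(9)$, $(10)$ are handled correctly; the first half of $(8)$, namely $\phi_{v_n}\rightharpoonup\phi_v$ in $\mathcal{D}$ via testing against $C_c^\infty$ functions and local Rellich compactness, is also fine as you describe it.

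The gap is in the second half of $(8)$. From weak convergence in $H^1(\mathbb{R}^3)$ alone the convergence $\int_{\mathbb{R}^3}\phi_{v_n}v_n^2\,dx\to\int_{\mathbb{R}^3}\phi_v v^2\,dx$ is simply false: take $v_n=v_0(\cdot-ne_1)$ with $0\not\equiv v_0\in C_c^\infty(\mathbb{R}^3)$; then $v_n\rightharpoonup 0$ in $H^1(\mathbb{R}^3)$, yet by item $(10)$ the quantity $\int\phi_{v_n}v_n^2\,dx=\int\int \mathcal{K}(x-y)v_0(x)^2v_0(y)^2\,dx\,dy>0$ is independent of $n$, while the claimed limit is $0$. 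Concretely, your tail estimate is exactly where the argument breaks: uniform boundedness of $\phi_{v_n}$ in $L^6$ and of $v_n^2$ in $L^{6/5}$ gives no uniform smallness of $\int_{B_R^c}\phi_{v_n}v_n^2\,dx$ as $R\to\infty$, because all of the mass may sit outside $B_R$. What is actually needed is strong convergence $v_n\to v$ in $L^{12/5}(\mathbb{R}^3)$; in the present paper this is available because all sequences live in $H^1_V(\mathbb{R}^3)$ with coercive $V$, so Theorem \ref{thm3} upgrades weak convergence to strong $L^{12/5}$ convergence. With that in hand one writes $\int\phi_{v_n}v_n^2-\int\phi_v v^2=\int\phi_{v_n}(v_n^2-v^2)+\int(\phi_{v_n}-\phi_v)v^2$, kills the first term by H\"older (with $\Vert\phi_{v_n}\Vert_6$ bounded and $v_n^2\to v^2$ in $L^{6/5}$) and the second by the already-established weak convergence in $\mathcal{D}\hookrightarrow L^6$ tested against the fixed function $v^2\in L^{6/5}$. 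As the hypothesis is stated (and as you attempted to prove it), no refinement of the cutoff argument can close this step.
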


Therefore, the pair $(u,\phi)\in H^1_V(\mathbb{R}^3)\times \mathcal{D}$ ( where $H^1_V(\mathbb{R}^3)$ is a subspace from $H^1(\mathbb{R}^3)$ and it will be defined later in Sect. 2 ) is a solution of \eqref{BPS} if, and only if, $\phi=\phi_u$ and $u\in H^1_V(\mathbb{R}^3)$ is a weak solution of the nonlocal problem
\begin{equation}\label{P}
   -\triangle u+V(x)u+q^2\phi_u u=f(x,u),  \ \ \text{in}\ \mathbb{R}^3. \tag{$\mathcal{P}$}
\end{equation}

By these facts, in the last years, many authors that studied the
system \eqref{BPS} have focused their attention on the problem
\eqref{P} aiming to establish the existence and nonexistence of
solutions, the multiplicity of solutions, ground state solutions,
radial, and nonradial solutions, see
\cite{AS,jde,AG,CRT,CT,FS,EH,HE,EH1,LC,LPT,LT1,MS,GK,TY,YCL,Z1,ZCC}.
The paper \cite{jde} seems to be the first paper that Schr\"
odinger-Bopp-Podolsky system is studied in the mathematical
literature. In the said paper, d'Avenia and Siciliano consider the
following Schr\"odinger-Bopp-Podolsky system:
\begin{equation}
\left\lbrace
\begin{array}{lll}
   -\triangle u+w u+q^2\phi u&=|u|^{p-2}u,  \ \ &\text{in}\ \mathbb{R}^3,  \\
   \ & \ & \ \\
   -\triangle\phi+a^2\triangle^2 \phi&=4\pi u^2,\ \ &  \text{in}\ \mathbb{R}^3,
\end{array}
 \right.
\end{equation}
where $a,w>0$, $q\neq 0$, $p\in (2,6)$. They studied the existence,
nonexistence and the behavior of the solution as $a\rightarrow 0$.
Again the solutions converge to the solution of the limit case.
Moreover, by using a Pohozaev-type identity, they proved that the
above system does not admit any nontrivial solution when $p \geq 6$.
However, the authors do not cover critical cases. In \cite{CT}, Chen
and Tang studied the existence of solutions of the critical system
\eqref{BPS}. By using some new analytic techniques and new
inequalities, they found nontrivial solutions, ground state
solutions of Nehari-Pohozaev type and ground state solutions of
Nehari type in constant potential case.  In all those papers, the
solutions found are nonnegative or with unknown signs. However,
related to nodal ( or sign-changing ) solutions, we found a few
papers, see \cite{HWT,WCL,Z2}. In \cite{HWT}, the authors proved the
existence of least-energy sign-changing solutions for the critical
Schr\"odinger-Bopp-Podolsky system. In \cite{WCL},  by using a
perturbation approach and the method of invariant sets of descending
flow incorporated with minimax arguments, the authors proved the
existence and multiplicity of sign-changing solutions for problem
\eqref{BPS} with superlinear nonlinearity. Moreover, the asymptotic
behavior of sign-changing solutions is also established. In
\cite{Z2}, Q. Zhang used some stronger conditions on the
nonlinearity reaction term  as the well-known
Ambrosetti–Rabinowitz (AR for short). \\

 Motivated by the above papers, in this work we are interested in finding ground state and least-energy nodal solutions for the problem \eqref{BPS}, or rather for the
 system \eqref{P}, without the (AR) or differentiability conditions on the non-linear term $f$. More precisely,  we assume the following assumption on the potential function $V(\cdot)$:
\begin{enumerate}
\item[$(V_0)$]$V(x) \in C(\mathbb{R}^3,\mathbb{R}_+)$  is coercive.
\end{enumerate}
For what concerns the nonlinearity reaction  term $f:\mathbb{R}^3\times \mathbb{R}\rightarrow \mathbb{R}$, we assume that $f$ is a measurable, locally Lipschitz in the second variable $t\in \mathbb{R}$, and satisfies the following assumptions:
\begin{enumerate}
\item[$(f_1)$] $\displaystyle{\lim\limits_{t\rightarrow 0}\frac{f(x,t)}{t}=0}$ uniformly in $x\in \mathbb{R}^3$;
\item[$(f_2)$] $\displaystyle{\lim\limits_{\vert t\vert\rightarrow +\infty}\frac{f(x,t)}{t^5}=0}$ uniformly in $x\in \mathbb{R}^3$;
\item[$(f_3)$]  $\displaystyle{\lim\limits_{t\rightarrow \pm\infty}\frac{F(x,t)}{t^4}=+\infty}$ uniformly in $x\in \mathbb{R}^3$, where $\displaystyle{F(x,t)=\int_0^t f(x,s)ds}$;
\item[$(f_4)$] $0<3f(x,t)t\leq f^*(x,t)t^2$, for all $f^*(x,t)\in \partial f(x,t)$, a.a. $x\in \mathbb{R}^3$, and all $\vert t\vert>0$, where $\partial f(x,t)$ is the set of the "generalized
subdifferential" of $f(x,\cdot)$ at $t$ ( which will be defined later in Sect. 2 ).
\end{enumerate}

Once that we will apply variational
methods on the problem \eqref{P}, we find  the term $\displaystyle{\int_{\mathbb{R}^3}\phi_u u^2dx}$ which is homogeneous of degree 4. Thus, the corresponding Ambrosetti-Rabinowitz
condition on $f(\cdot,\cdot)$ is the following:
\begin{enumerate}
    \item[(AR)] There exists $\theta>4$ such that $0<\theta F(x,t)\leq f(x,t)t$, for a.a. $x\in \mathbb{R}^3$ and all $\vert t\vert >0$.
\end{enumerate}
Therefore, our assumption $(f_4)$ is weaker than the (AR) condition (see Remark \ref{4rem1}).\\

 Next, we state our main results for problem \eqref{P}.

\begin{thm}\label{thm1}
Suppose that the hypotheses $(f_1)-(f_4)$ and $(V_0)$ hold. Then, problem \eqref{P} has a ground state solution $\widehat{u}\in H^1_V(\mathbb{R}^3)$.
\end{thm}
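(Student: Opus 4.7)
The plan is to set up a variational framework built around the Nehari manifold, adapted to the nonsmooth setting since $f$ is only locally Lipschitz in the second variable. Let
\[
I(u)=\tfrac{1}{2}\|u\|_V^{2}+\tfrac{q^{2}}{4}\int_{\mathbb{R}^{3}}\phi_{u}u^{2}\,dx-\int_{\mathbb{R}^{3}}F(x,u)\,dx,
\]
where $\|u\|_V^{2}=\int_{\mathbb{R}^{3}}(|\nabla u|^{2}+V(x)u^{2})\,dx$. By Lemma \ref{lem1}(9) the nonlocal term is of class $C^{1}$, while the last term is only locally Lipschitz. The coercivity hypothesis $(V_0)$ delivers a compact embedding $H^{1}_{V}(\mathbb{R}^{3})\hookrightarrow L^{p}(\mathbb{R}^{3})$ for $p\in[2,6)$, which together with $(f_1)$--$(f_2)$ shows $I$ is well defined and locally Lipschitz on $H^{1}_{V}(\mathbb{R}^{3})$. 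I will work with Clarke's generalized subdifferential $\partial I$ and call $\widehat{u}$ a \emph{critical point} if $0\in\partial I(\widehat{u})$.

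Next I would introduce the Nehari manifold
\[
\mathcal{N}=\bigl\{u\in H^{1}_{V}(\mathbb{R}^{3})\setminus\{0\}:\langle u,u\rangle_{V}+q^{2}\!\int_{\mathbb{R}^{3}}\phi_{u}u^{2}\,dx=\int_{\mathbb{R}^{3}}f(x,u)u\,dx\bigr\},
\]
and study the fibering maps $\gamma_{u}(t)=I(tu)$ for $u\neq 0$. Using $(f_1)$--$(f_2)$ one gets $\gamma_{u}(t)>0$ for small $t>0$, and $(f_3)$ combined with the homogeneity $\phi_{tu}=t^{2}\phi_{u}$ forces $\gamma_{u}(t)\to-\infty$ as $t\to+\infty$. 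The crucial use of $(f_4)$ is to show that $\gamma_{u}$ has a \emph{unique} positive critical point $t(u)$ which is a global maximum: differentiating in $t$ one finds
\[
\gamma_{u}'(t)=t\,\|u\|_{V}^{2}+q^{2}t^{3}\!\int_{\mathbb{R}^{3}}\phi_{u}u^{2}\,dx-\int_{\mathbb{R}^{3}}f(x,tu)u\,dx,
\]
and $(f_4)$ implies that $t\mapsto f(x,tu)u/t^{3}$ is nondecreasing in $|t|$ (in the generalized sense), giving uniqueness and the fact that $tu\in\mathcal{N}$ iff $t=t(u)$. In particular $\mathcal{N}\neq\emptyset$ and $c:=\inf_{\mathcal{N}}I>0$, with a positive uniform lower bound on $\|u\|_{V}$ for $u\in\mathcal{N}$.

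For the minimization, take $\{u_{n}\}\subset\mathcal{N}$ with $I(u_{n})\to c$. Using the $\mathcal{N}$-identity to eliminate $\int\!f(x,u_{n})u_{n}$ in $I(u_{n})$ and invoking $(f_4)$ (in the averaged form $F(x,t)\leq \tfrac{1}{3}f(x,t)t$, obtained by integrating the Clarke-subdifferential inequality along rays) one shows $\|u_{n}\|_{V}$ is bounded. Up to a subsequence $u_{n}\rightharpoonup\widehat{u}$ in $H^{1}_{V}$ and $u_{n}\to\widehat{u}$ in $L^{p}$ for $p\in[2,6)$. Standard arguments combined with Lemma \ref{lem1}(8) yield $\int\phi_{u_{n}}u_{n}^{2}\to\int\phi_{\widehat u}\widehat u^{2}$ and $\int f(x,u_{n})u_{n}\to\int f(x,\widehat u)\widehat u$; the lower bound on $\|u_{n}\|_{V}$ transfers to $\widehat{u}\neq 0$. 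Then one projects: $\widehat{t}\,\widehat{u}\in\mathcal{N}$ for some $\widehat{t}>0$, and the maximum property of $\gamma_{\widehat{u}}$ together with weak lower semicontinuity gives $I(\widehat{t}\,\widehat{u})\leq \liminf I(u_{n})=c$, forcing $\widehat{t}=1$ and $I(\widehat u)=c$.

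The main obstacle is twofold: first, formulating and exploiting the ``fibering monotonicity'' with only a Clarke-subdifferential inequality rather than a genuine derivative of $f$ in $t$, which is exactly what $(f_4)$ is tailored for; second, verifying that the minimizer is actually a critical point of $I$ in the Clarke sense. For the latter I would invoke a nonsmooth Lagrange multiplier rule on the $C^{1}$ constraint manifold $\mathcal{N}$, deduce $0\in\partial I(\widehat u)+\lambda\,\nabla G(\widehat u)$ with $G$ the Nehari constraint functional, and eliminate $\lambda$ by pairing with $\widehat u$ and using $(f_4)$ strictly to rule out $\lambda\neq 0$. This yields a ground state solution of \eqref{P}, and the fixed sign is obtained by testing on $|\widehat u|$ and noting that $\gamma_{|\widehat u|}$ attains the same maximum thanks to the sign symmetry granted by $(f_4)$ on both sides of the origin.
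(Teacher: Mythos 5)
Your overall strategy is the paper's: project onto the Nehari manifold via the fibering map (uniqueness of $t_u$ from the monotonicity \eqref{1} derived from $(f_4)$, as in Lemma \ref{lem3}, with the global-maximum property of Lemma \ref{6lem4}), minimize $J$ on $\mathcal{N}$, and then remove the constraint with Clarke's nonsmooth multiplier rule, killing $\lambda$ because $(f_4)$ forces $\langle \xi^*_{\widehat u},\widehat u\rangle<0$ for every $\xi^*_{\widehat u}\in\partial\xi(\widehat u)$ (Propositions \ref{4prop5} and \ref{4prop9}). The one genuinely different sub-argument is your proof of boundedness of the minimizing sequence: you use the identity $J(u_n)=J(u_n)-\tfrac14\langle J'(u_n),u_n\rangle=\tfrac14\Vert u_n\Vert^2+\int\left[\tfrac14 f(x,u_n)u_n-F(x,u_n)\right]dx$ together with an averaged form of $(f_4)$, whereas the paper normalizes $v_n=u_n/\Vert u_n\Vert$ and derives a contradiction from $(f_3)$ and Fatou's lemma. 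Your route is shorter and avoids $(f_3)$ at this step, \emph{provided} you use the correct constant: integrating \eqref{1} along rays gives $F(x,t)\le\tfrac14 f(x,t)t$ (equivalently $f(x,t)t-4F(x,t)\ge 0$, which is \eqref{2} plus the value at $t=0$), not $F(x,t)\le\tfrac13 f(x,t)t$ as you wrote. With $\tfrac13$ the bracket $\tfrac14 f(x,u_n)u_n-F(x,u_n)$ is not controlled and the argument does not close; with $\tfrac14$ it does. Note also that $J$ here is actually $C^1$ (the nonsmoothness lives only in the constraint functional $\xi(u)=\langle J'(u),u\rangle$, which would require differentiating $f$ in $t$), so $\partial J(\widehat u)=\{J'(\widehat u)\}$; your framing of $J$ as merely locally Lipschitz is harmless but your later description of $\mathcal{N}$ as a ``$C^1$ constraint manifold'' with a gradient $\nabla G$ contradicts the whole point of the nonsmooth setting --- the multiplier identity must be paired with $\widehat u$ for \emph{every} element of $\partial\xi(\widehat u)$, which is exactly where $(f_4)$ enters.

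Two further points. First, ``forcing $\widehat t=1$'' does not follow from $I(\widehat t\,\widehat u)\le c$ alone; the paper needs a separate two-step argument (ruling out $t_{\widehat u}>1$ via \eqref{1} and Fatou, then $t_{\widehat u}<1$ via \eqref{2}). For the existence statement this is dispensable --- $\widehat t\,\widehat u\in\mathcal{N}$ with $J(\widehat t\,\widehat u)=c_0$ is already a minimizer --- but you should either supply that argument or simply rename the minimizer. Relatedly, ``the lower bound on $\Vert u_n\Vert_V$ transfers to $\widehat u\neq 0$'' is not literally true under weak convergence; the correct route (Lemma \ref{lem5+}) is the uniform lower bound $\tau^2\le\int f(x,u_n)u_n\,dx$ combined with the compact convergence of Lemma \ref{9lem1}. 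Second, your closing remark that the fixed sign follows by testing on $\vert\widehat u\vert$ is not valid here: $F(x,\vert u\vert)\neq F(x,u)$ since no evenness of $f$ in $t$ is assumed, and in any case nonnegativity of some minimizer does not show the found one has fixed sign. The paper instead deduces the fixed sign from the strict energy gap $2c_0<c_1$ between $\mathcal{N}$ and $\mathcal{M}$; but this concerns the Corollary, not Theorem \ref{thm1} itself.
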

\begin{thm}\label{thm2}
Suppose that the hypotheses $(f_1)-(f_4)$ and $(V_0)$ hold. Then,
problem \eqref{P} has a least energy sign-changing solution $\widehat{w}\in H^1_V(\mathbb{R}^3)$. Moreover, the energy of the solution $\widehat{w}$  is twice as large as
that of the ground state solution $\widehat{u}$.
\end{thm}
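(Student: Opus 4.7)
The plan is to look for a least-energy nodal solution by minimizing the (locally Lipschitz) energy functional $I$ associated with \eqref{P} on the sign-changing Nehari-type set
\[
\mathcal{M} = \bigl\{u \in H^1_V(\mathbb{R}^3):\ u^\pm \not\equiv 0,\ \langle I'(u), u^+\rangle = \langle I'(u), u^-\rangle = 0\bigr\},
\]
where $u^\pm = \max(\pm u,0)$ and the pairing with $I'$ is read through Clarke's generalized gradient $\partial I$ (which exists since $f$ is only locally Lipschitz in the second variable). The algebraic cornerstone is that $u^+ u^- \equiv 0$ gives $u^2 = (u^+)^2 + (u^-)^2$, so Lemma~\ref{lem1}(10) yields
\[
\int_{\mathbb{R}^3}\phi_u u^2\,dx = \int_{\mathbb{R}^3}\phi_{u^+}(u^+)^2\,dx + \int_{\mathbb{R}^3}\phi_{u^-}(u^-)^2\,dx + 2\int_{\mathbb{R}^3}\phi_{u^+}(u^-)^2\,dx,
\]
and therefore $I(u) = I(u^+) + I(u^-) + \tfrac{q^2}{2}\int_{\mathbb{R}^3}\phi_{u^+}(u^-)^2\,dx$, with the cross-term nonnegative by Lemma~\ref{lem1}(2).

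First I would carry out a two-parameter fibering. For $u \in H^1_V(\mathbb{R}^3)$ with $u^\pm \neq 0$, set $\Psi_u(s,t) := I(su^+ + tu^-)$ on $[0,\infty)^2$. Combining $(f_1),(f_2)$ with the quadratic kinetic term and the quartic nonlocal term, $\Psi_u$ is positive near the origin, while $(f_3)$ forces $\Psi_u(s,t) \to -\infty$ along every ray; hence the supremum is attained at some $(s_u,t_u)\in(0,\infty)^2$. Uniqueness of this maximizer is exactly where $(f_4)$ is needed: via the nonsmooth chain rule it implies that for any measurable selection $f^\ast(x,\cdot)\in\partial f(x,\cdot)$ the map $\tau \mapsto \tau^{-3}f(x,\tau)$ is monotone on each half-line, ruling out a second critical point of $\Psi_u$. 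Setting $c^\ast:=\inf_{\mathcal{M}} I$, the same analysis yields $c^\ast > 0$.

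Next, to attain $c^\ast$, take a minimizing sequence $(w_n)\subset \mathcal{M}$. Integrating $(f_4)$ along rays gives the classical inequality $F(x,t) \le \tfrac14 f(x,t) t$, hence on $\mathcal{M}$
\[
c^\ast + o(1) = I(w_n) = I(w_n) - \tfrac14\langle I'(w_n), w_n\rangle \ge \tfrac14\Vert w_n\Vert^2,
\]
so $(w_n)$ is bounded in $H^1_V(\mathbb{R}^3)$. The coercivity $(V_0)$ makes the embedding $H^1_V(\mathbb{R}^3)\hookrightarrow L^p(\mathbb{R}^3)$ compact for $2\le p < 6$, so along a subsequence $w_n\rightharpoonup \widehat{w}$, $w_n^\pm\to \widehat{w}^\pm$ in $L^p$, and the nonlocal part converges by Lemma~\ref{lem1}(8). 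The delicate step is to show $\widehat{w}^\pm\neq 0$: the constraint $w_n\in\mathcal{M}$ combined with $(f_1),(f_2)$ produces a uniform lower bound $\Vert w_n^\pm\Vert^2\ge\delta>0$, which survives in the limit thanks to the $L^p$ compactness. The fibering then provides unique $s_0,t_0>0$ with $s_0\widehat{w}^+ + t_0\widehat{w}^- \in\mathcal{M}$; weak lower semicontinuity together with Fatou's lemma applied to the Clarke-type Lagrangian forces $s_0=t_0=1$ and $I(\widehat{w})=c^\ast$.

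Finally, I must verify $0\in\partial I(\widehat{w})$ and compare energies. This is the main obstacle, since $I$ is nonsmooth and $\mathcal{M}$ is not a smooth manifold. I would argue by contradiction via a nonsmooth quantitative deformation lemma: if $0\notin\partial I(\widehat{w})$, build a locally Lipschitz pseudogradient flow decreasing $I$ in a neighbourhood of $\widehat{w}$ and use the continuity of $(s,t)\mapsto su^+ + tu^-$ to push the deformed element back to $\mathcal{M}$ with strictly smaller energy, contradicting the minimality of $c^\ast$. For the energy comparison, let $\widehat{u}$ be the ground state of Theorem~\ref{thm1} with $m_g:=I(\widehat{u})$, and pick $s^\ast,t^\ast>0$ so that $s^\ast\widehat{w}^+,\,t^\ast\widehat{w}^-$ lie on the ordinary Nehari manifold $\mathcal{N}$. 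Since $(1,1)$ is the unique maximizer of $\Psi_{\widehat{w}}$,
\[
I(\widehat{w}) = \Psi_{\widehat{w}}(1,1)\ge \Psi_{\widehat{w}}(s^\ast,t^\ast) = I(s^\ast\widehat{w}^+) + I(t^\ast\widehat{w}^-) + \tfrac{q^2(s^\ast)^2(t^\ast)^2}{2}\int_{\mathbb{R}^3}\phi_{\widehat{w}^+}(\widehat{w}^-)^2\,dx \ge 2m_g,
\]
where $I(s^\ast\widehat{w}^\pm)\ge m_g$ by definition of the ground state level and the cross-term is nonnegative.
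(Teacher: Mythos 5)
Your overall architecture coincides with the paper's: minimize $J$ on $\mathcal{M}$, control the two--parameter fibering map $\Psi_u(s,t)=J(su^++tu^-)$ via $(f_1)$--$(f_4)$ (existence and uniqueness of the maximizer, as in Lemmas \ref{4lem11} and \ref{4lem16}), recover compactness from the coercivity of $V$ and the uniform lower bound $\Vert w_n^\pm\Vert\geq\tau$ (Lemma \ref{lem5}), force $s_0=t_0=1$ by the monotonicity coming from \eqref{1}--\eqref{2} plus Fatou (Proposition \ref{4prop15}), and extract the energy doubling from the nonnegative cross term $\int_{\mathbb{R}^3}\phi_{\widehat{w}^+}(\widehat{w}^-)^2dx$ exactly as in Proposition \ref{4prop30}. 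Two of your choices are genuine (and welcome) simplifications: the boundedness of the minimizing sequence via $F(x,t)\leq\frac14 f(x,t)t$ (which is legitimately available from \eqref{1}) is more direct than the paper's contradiction argument with $(f_3)$; and note only that for the nontriviality of $\widehat{w}^\pm$ the quantity that survives the weak limit is $\int f(x,w_n^\pm)w_n^\pm dx\geq\tau^2$ (via the compact embedding), not the $H^1_V$--norm bound itself, which is merely weakly lower semicontinuous in the wrong direction.

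The genuine divergence, and the one place where your proposal has a gap, is the step showing that the constrained minimizer is an actual critical point. The paper keeps the smooth functional $J$ (which \emph{is} $C^1$, since $F$ is a primitive of the continuous $f$; the nonsmoothness lives only in the constraint functionals $\xi_\pm(w)=\langle J'(w),w^\pm\rangle$, which involve $f$ itself) and applies Clarke's nonsmooth multiplier rule, then kills the multipliers by showing $\langle\xi^*_{\widehat{w}^\pm},\widehat{w}\rangle<0$ using $(f_4)$ --- this is where $(f_4)$ earns its keep. You instead invoke a deformation argument, which is a standard and workable alternative, but as written it is incomplete: ``push the deformed element back to $\mathcal{M}$ by continuity'' elides the essential ingredient. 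One must deform the entire surface $(s,t)\mapsto s\widehat{w}^++t\widehat{w}^-$ over a square around $(1,1)$, verify the deformation is the identity on the boundary of the square (where $\Psi_{\widehat{w}}<c^*-\varepsilon$ by strict maximality at $(1,1)$), and then run a Brouwer degree or Miranda argument on the map $(s,t)\mapsto\bigl(\langle J'(\cdot),(\cdot)^+\rangle,\langle J'(\cdot),(\cdot)^-\rangle\bigr)$ to certify that the deformed surface still intersects $\mathcal{M}$; without that topological step the contradiction does not close. Also, since $J\in C^1$, the ordinary quantitative deformation lemma suffices --- your appeal to a nonsmooth pseudogradient flow reflects a misplacement of where the Lipschitz-only regularity actually enters. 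Finally, the paper obtains the strict inequality $c_1>2c_0$ (whence the fixed sign of the ground state) from the strict positivity of the cross term, whereas your chain only records $\geq 2m_g$.
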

\begin{corollary}
 As a consequence of Theorems \ref{thm1} and \ref{thm2}, the ground state solution $\widehat{u}$ of problem \eqref{P} is with a fixed sign.
\end{corollary}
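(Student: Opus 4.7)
The plan is to argue by contradiction using the strict inequality $c > m$ coming from Theorem \ref{thm2}, where $m$ denotes the ground state energy produced by Theorem \ref{thm1} and $c$ denotes the least energy among sign-changing solutions. Let $I$ be the energy functional naturally associated with \eqref{P}, so that $I(\widehat u)=m$ and $I(\widehat w)=c=2m$.

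Suppose, for contradiction, that $\widehat u$ changes sign, that is, both $\widehat u^{+}\not\equiv 0$ and $\widehat u^{-}\not\equiv 0$. Then $\widehat u$ is itself a sign-changing solution of \eqref{P}, and hence it is a competitor in the minimization problem defining $\widehat w$. By the definition of least energy nodal solution, this yields $I(\widehat u)\geq c$. Combining with Theorem \ref{thm2}, we obtain
\[
m \;=\; I(\widehat u)\;\geq\; c \;=\; 2\,I(\widehat u)\;=\;2m,
\]
so that $m\leq 0$.

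To close the argument, I would recall that the ground state $\widehat u$ is constructed in Theorem \ref{thm1} via a Nehari-type minimization. The standard structure of that argument, together with hypotheses $(f_1)$ and $(f_2)$, gives a uniform bound $\|u\|_{H^{1}_{V}}\geq \rho>0$ for every $u$ on the Nehari manifold, which in turn forces $I(u)\geq \alpha>0$ on the Nehari manifold; in particular $m=\inf_{\mathcal N}I\geq \alpha>0$. (Concretely, $(f_1)$ and $(f_2)$ produce, for every $\varepsilon>0$, a $C_\varepsilon>0$ with $|f(x,t)t|\leq \varepsilon t^{2}+C_\varepsilon |t|^{6}$, and then the Nehari identity combined with Sobolev embeddings forces a lower bound on $\|u\|$, hence on $I(u)$.) This contradicts $m\leq 0$, so $\widehat u$ cannot change sign.

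The only step that requires any genuine work is the strict positivity $m>0$, and this is not a new argument but precisely the standard by-product of the Nehari construction used to prove Theorem \ref{thm1}; once it is recorded, the corollary reduces to the one-line comparison above. I would therefore present the proof as: (i) invoke Theorem \ref{thm2} to write $c=2m$; (ii) note that any sign-changing ground state would satisfy $m\geq c$; (iii) cite the positivity of $m$ coming from the Nehari-manifold step of Theorem \ref{thm1}; (iv) conclude.
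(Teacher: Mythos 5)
Your proof is correct and follows essentially the same route as the paper's Proposition \ref{4prop30}: assume $\widehat{u}$ changes sign, observe that it then lies in $\mathcal{M}$ so that $c_0\geq c_1$, and combine this with the energy comparison $c_1\geq 2c_0$ and the positivity $c_0>0$ (the paper's Lemma \ref{lem6}, which is exactly the Nehari lower bound you describe) to reach a contradiction. The only cosmetic difference is that the paper's argument actually establishes the strict inequality $c_1>2c_0$ rather than the equality $c_1=2c_0$ you read off from the statement of Theorem \ref{thm2}, but either version closes the argument.
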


 The paper is organized as follows. In Sect. 2, we give the corresponding energy functional to problem \eqref{P} and some tools. The Sect. 3 is devoted to proving the existence of ground state solution to problem \eqref{P}. Finally, we show the existence of a least energy nodal solution to problem \eqref{P} and we compare the energy levels of the ground state and nodal solution.
\section{Preliminary and tools}
 In this Section, we give the variational setting and the corresponding energy functional associated with problem \eqref{P}, and the definition of "generalized subdifferential" which are needed in the sequel.

 First, denote by $L^{r}(\mathbb{R}^3)$, for all $r\in [1,+\infty)$, the usual Lebesgue space with norm
 $$\Vert u\Vert_r:=\left( \int_{\mathbb{R}^3}\vert u(x)\vert^rdx\right)^\frac{1}{r}.
 $$
Under the assumption $(V_0)$, we could define the Hilbert space
$$H_V^1(\mathbb{R}^3):=\left\lbrace u\in H^1(\mathbb{R}^3):\ \int_{\mathbb{R}^3}V(x) u^2dx<+\infty\right\rbrace,$$
 which is equipped with the norm
 $$\Vert u\Vert:=\left( \int_{\mathbb{R}^3}\vert \nabla u\vert^2dx+\int_{\mathbb{R}^3}V(x) u^2dx\right)^\frac{1}{2},\ \ \text{for all}\ u\in H_V^1(\mathbb{R}^3).$$
 \begin{thm}[see \cite{bart}]\label{thm3}
  Under the assumption $(V_0)$, the space $H_V^1(\mathbb{R}^3)$ is compactly embedded into $L^r(\mathbb{R}^3)$, for all $r\in[2,6)$. Moreover,  the embedding $H_V^1(\mathbb{R}^3)\hookrightarrow L^{s}(\mathbb{R}^3)$, for all $s\in[2,6]$ is continuous, then there exists $C_s>0$ such that
  $$\Vert u\Vert_s\leq C_s \Vert u\Vert,\ \text{for all}\ u\in H_V^1(\mathbb{R}^3).$$
 \end{thm}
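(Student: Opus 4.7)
The plan is to prove the continuous embedding first and then upgrade to compactness for $r\in[2,6)$ via a tail estimate furnished by the coercivity of $V$.

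First I would establish the continuous embedding $H_V^1(\mathbb{R}^3)\hookrightarrow H^1(\mathbb{R}^3)$. Since $V\in C(\mathbb{R}^3,\mathbb{R}_+)$ is coercive, i.e.\ $V(x)\to+\infty$ as $|x|\to\infty$, there exist a constant $c>0$ and a compact set $K\subset\mathbb{R}^3$ with $V(x)\ge c$ on $K^c$. For $u\in H_V^1(\mathbb{R}^3)$ one can then split
$$
\|u\|_2^2=\int_K u^2\,dx+\int_{K^c}u^2\,dx\le |K|^{2/3}\|u\|_6^2+\frac{1}{c}\int_{\mathbb{R}^3}V(x)u^2\,dx.
$$
By the classical Sobolev embedding $\|u\|_6\le C\|\nabla u\|_2$, the right-hand side is bounded by $C\|u\|^2$, so $H_V^1(\mathbb{R}^3)\hookrightarrow L^2(\mathbb{R}^3)$. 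Combined with $\|\nabla u\|_2\le\|u\|$, this yields the continuous embedding into $H^1(\mathbb{R}^3)$, and then into $L^s(\mathbb{R}^3)$ for every $s\in[2,6]$ by Sobolev/interpolation, producing the constant $C_s$ of the statement.

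Next I would prove the compact embedding into $L^2(\mathbb{R}^3)$. Let $(u_n)\subset H_V^1(\mathbb{R}^3)$ be bounded, say $\|u_n\|\le M$. Passing to a subsequence, $u_n\rightharpoonup u$ in $H_V^1(\mathbb{R}^3)$ and, by the continuous embedding above, weakly in $H^1(\mathbb{R}^3)$. For any $R>0$, the classical Rellich--Kondrachov theorem gives (along a further subsequence) $u_n\to u$ strongly in $L^2(B_R)$. The crucial point is the uniform control of the tail: setting
$$
V_R:=\inf_{|x|\ge R}V(x),
$$
coercivity yields $V_R\to+\infty$ as $R\to+\infty$, and hence
$$
\int_{|x|\ge R}|u_n-u|^2\,dx\le\frac{1}{V_R}\int_{|x|\ge R}V(x)|u_n-u|^2\,dx\le\frac{1}{V_R}\,\|u_n-u\|^2\le\frac{4M^2}{V_R}.
$$
Given $\varepsilon>0$, first fix $R$ so large that $4M^2/V_R<\varepsilon/2$, and then pick $n$ large so that $\|u_n-u\|_{L^2(B_R)}^2<\varepsilon/2$. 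This shows $u_n\to u$ in $L^2(\mathbb{R}^3)$.

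Finally, for $r\in(2,6)$, I would interpolate. Writing $\tfrac{1}{r}=\tfrac{\lambda}{2}+\tfrac{1-\lambda}{6}$ with $\lambda\in(0,1)$, one has
$$
\|u_n-u\|_r\le \|u_n-u\|_2^{\lambda}\,\|u_n-u\|_6^{1-\lambda}.
$$
The $L^6$ factor stays bounded by the continuous embedding, while the $L^2$ factor tends to $0$ by the previous step; therefore $u_n\to u$ in $L^r(\mathbb{R}^3)$ for every $r\in[2,6)$. The main obstacle is really the tail estimate: Rellich--Kondrachov fails on $\mathbb{R}^3$, and the coercivity of $V$ is exactly the replacement that suppresses the mass at infinity uniformly in $n$. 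The endpoint $r=6$ cannot be included, reflecting the lack of compactness in the Sobolev embedding.
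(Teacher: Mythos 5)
Your proof is correct and complete: the continuous embedding via the split over a compact set plus Sobolev, the tail estimate from the coercivity of $V$ combined with Rellich--Kondrachov on balls, and the interpolation step for $r\in(2,6)$ constitute exactly the standard argument. The paper itself offers no proof of this statement (it is quoted from the reference \cite{bart}), and your argument is the one given in that literature, so there is nothing to contrast.
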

For each $u\in H^1_V(\mathbb{R}^3)$, we denote by $u^-:=\min\lbrace 0,u\rbrace$ and $u^+:=\max\lbrace 0,u\rbrace$. As a consequence, from Theorem \ref{thm3} and assumptions $(f_1)-(f_2)$, we have the following lemma.
\begin{lemma}\label{9lem1}
Assume that assumptions $(f_1)-(f_2)$ and $(V_0)$ hold. Let $\lbrace u_n\rbrace_{n\in \mathbb{N}}\subset H^1_V (\mathbb{R}^3)$ such that $u_n \rightharpoonup u$ in $H^1_V(\mathbb{R}^3)$ as $n\rightarrow +\infty$, then
\begin{enumerate}
    \item[$(1)$] $\displaystyle{\lim\limits_{n\rightarrow +\infty}\int_{\mathbb{R}^3}F(x,u_n)dx=\int_{\mathbb{R}^3}F(x,u)dx}.$
    \item[$(2)$]$\displaystyle{\lim\limits_{n\rightarrow +\infty}\int_{\mathbb{R}^3}F(x,u_n^\pm)dx=\int_{\mathbb{R}^3}F(x,u^\pm)dx}.$
    \item[$(3)$]$\displaystyle{\lim\limits_{n\rightarrow +\infty}\int_{\mathbb{R}^3}f(x,u_n)u_ndx=\int_{\mathbb{R}^3}f(x,u)udx}.$
    \item[$(4)$]$\displaystyle{\lim\limits_{n\rightarrow +\infty}\int_{\mathbb{R}^3}f(x,u_n^\pm)u_n^\pm dx=\int_{\mathbb{R}^3}f(x,u^\pm)u^\pm dx}.$
\end{enumerate}
\end{lemma}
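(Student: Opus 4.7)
The approach combines the subcritical growth encoded in $(f_1)$ and $(f_2)$ with the compact embedding from Theorem \ref{thm3}. A first, routine consequence of $(f_1)$--$(f_2)$ is that, for every $\varepsilon>0$ and every fixed $q\in(2,6)$, there exists $C_\varepsilon>0$ such that
\begin{equation*}
|f(x,t)|\le \varepsilon(|t|+|t|^5)+C_\varepsilon|t|^{q-1},\qquad |F(x,t)|\le \varepsilon(t^2+|t|^6)+C_\varepsilon|t|^q,
\end{equation*}
for a.a.\ $x\in\mathbb{R}^3$ and all $t\in\mathbb{R}$; the intermediate range $\delta_\varepsilon\le|t|\le M_\varepsilon$ is dominated by a pure $|t|^{q-1}$ term. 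Theorem \ref{thm3} converts the hypothesis $u_n\rightharpoonup u$ into strong convergence $u_n\to u$ in $L^r(\mathbb{R}^3)$ for every $r\in[2,6)$, while $\{u_n\}$ remains bounded in $L^6(\mathbb{R}^3)$ by the continuous embedding. Passing to a subsequence, $u_n\to u$, hence $u_n^\pm\to u^\pm$ pointwise a.e.; the inequality $|u_n^\pm-u^\pm|\le |u_n-u|$ promotes this to strong $L^r$-convergence of $u_n^\pm$ to $u^\pm$ for $r\in[2,6)$, and $\{u_n^\pm\}$ is bounded in $L^6$ as well.

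For item (3), I would split
\begin{equation*}
f(x,u_n)u_n-f(x,u)u = f(x,u_n)(u_n-u)+(f(x,u_n)-f(x,u))u.
\end{equation*}
The first summand is controlled by the growth bound and H\"older's inequality:
\begin{equation*}
\int_{\mathbb{R}^3}|f(x,u_n)(u_n-u)|\,dx\le \varepsilon\|u_n\|_2\|u_n-u\|_2+\varepsilon\|u_n\|_6^5\|u_n-u\|_6+C_\varepsilon\|u_n\|_q^{q-1}\|u_n-u\|_q.
\end{equation*}
Since $\|u_n-u\|_2\to 0$ and $\|u_n-u\|_q\to 0$, while $\|u_n-u\|_6$ is merely bounded, $\limsup_n$ of the right-hand side is at most $C\varepsilon$, which vanishes as $\varepsilon\to 0$. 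The second summand converges to zero by Vitali's theorem: the integrand tends pointwise to $0$ (the continuity of $f(x,\cdot)$ supplied by the local Lipschitz hypothesis), and equi-integrability plus tightness follow from the growth bound together with the absolute continuity of $\int_E(u^2+|u|^q+u^6)\,dx$ with respect to $|E|$ for $u\in L^2\cap L^q\cap L^6$. Assertion (1) follows by the same scheme applied to the representation $F(x,u_n)-F(x,u)=\int_0^1 f(x,u+s(u_n-u))(u_n-u)\,ds$, in which the factor $|f(x,u+s(u_n-u))|$ is controlled pointwise by $|u_n|$ and $|u|$ via the growth bound.

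Assertions (2) and (4) follow by repeating the two-step argument verbatim with $u_n,u$ replaced by $u_n^\pm,u^\pm$; all required facts (strong $L^r$-convergence for $r\in[2,6)$, uniform $L^6$-boundedness, a.e.\ convergence) persist for these sequences. The single delicate point in the entire lemma is the lack of strong $L^6$-convergence for $u_n$, which blocks a naive dominated convergence argument on the near-critical pieces $|u_n|^5|u_n-u|$ and $|u_n|^6$; the proof rests on exploiting the strict subcriticality encoded in $(f_2)$, which allows an arbitrarily small prefactor $\varepsilon$ to be pulled out of these top-order terms, so that the weak-only $L^6$-convergence produces merely a controllable $O(\varepsilon)$ error and does not obstruct the passage to the limit.
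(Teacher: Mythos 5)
Your proof is correct, and it follows exactly the route the paper itself intends: the paper states this lemma without proof, presenting it as a direct consequence of the compact embedding in Theorem \ref{thm3} together with the growth conditions $(f_1)$--$(f_2)$, which is precisely the argument you carry out in detail. Your refinement of the growth estimate to $|f(x,t)|\le \varepsilon(|t|+|t|^5)+C_\varepsilon|t|^{q-1}$ with $q\in(2,6)$ is the right (and necessary) sharpening of the paper's inequality \eqref{5eq1}, since only the strictly subcritical exponent $q$ enjoys strong convergence while the $L^6$-level terms must carry the small prefactor $\varepsilon$; you correctly identify and resolve this, which is the only delicate point in the lemma.
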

Next, we would like to mention that, from assumptions $(f_1)-(f_2)$  and Lemma \ref{lem1}, the existence of solutions for problem \eqref{P} can be made via
variational methods. In particular,
 the corresponding energy functional to problem \eqref{P} is $J:H^1_V(\mathbb{R}^3)\longrightarrow \mathbb{R}$, which is defined by
\begin{align}\label{J}
J(u)&:=\frac{1}{2} \left(\int_{\mathbb{R}^3}\vert\nabla u\vert^2dx+\int_{\mathbb{R}^3}V(x) u^2dx\right)+\frac{q^2}{4}\int_{\mathbb{R}^3}\phi_u u^2dx-\int_{\mathbb{R}^3}F(x,u)dx\nonumber\\
&=\frac{1}{2} \Vert u\Vert^2+\frac{q^2}{4}\int_{\mathbb{R}^3}\phi_u u^2dx-\int_{\mathbb{R}^3}F(x,u)dx,
\end{align}
 belongs to $C^1\left(H^1_V(\mathbb{R}^3),\mathbb{R}\right)$ and has the following derivative
$$J^{'}(u)v=\int_{\mathbb{R}^3}\nabla u\nabla v dx +\int_{\mathbb{R}^3} V(x)uvdx+q^2\int_{\mathbb{R}^3} \phi_u uvdx-\int_{\mathbb{R}^3}f(x,u)vdx,\ \text{for all}\ u,v\in H^1_V(\mathbb{R}^3).$$
Hence, critical points of $J$ are the weak solutions for the nonlocal problem \eqref{P}.\\

In what follows, we denote by $\mathcal{N}$  the Nehari manifold associated with $J$, that is,
$$\mathcal{N}:=\left\lbrace u\in H^1_V(\mathbb{R}^3):\  u\not\equiv 0,\ \langle J^{'}(u),u\rangle=0\right\rbrace,$$
where $\langle \cdot,\cdot\rangle$ is the duality brackets between $H^1_V(\mathbb{R}^3)$ and it topological dual $\left(H^1_V(\mathbb{R}^3)\right)^*$.\\
A critical point $u\not\equiv 0$ of $J$ is a ground state of \eqref{P} if
$$J(u):=\inf_{v\in \mathcal{N}}J(v).$$
Since we are looking for least energy nodal solutions (or sign-changing solutions), our second goal is to prove the
existence of a critical point for $J$ in the set
$$\mathcal{M}:=\left\lbrace u\in\mathcal{N}:\  u^\pm\not\equiv 0,\ \langle J^{'}(u),u^+\rangle=0=\langle J^{'}(u),u^-\rangle\right\rbrace.$$

A main tool used in the present paper is the subdifferential theory of Clark \cite{418,419} for locally Lipschitz functionals. Let $\mathbb{X}$ be a Banach space, $\mathbb{X}^*$ its topological dual, and let $\langle \cdot ,\cdot\rangle_{\mathbb{X}}$ denote the duality brackets for the pair $(\mathbb{X},\mathbb{X}^*)$.
\begin{definition}
Let the functional $\Psi:\mathbb{X}\rightarrow \mathbb{R}$. We say that $\Psi$ is locally Lipschitz if, for every $x\in \mathbb{X}$, there exists an open neighborhood $U(x)$ of $x$ and $k_x>0$ such that
$$\vert \Psi(u)-\Psi(v)\vert\leq k_x \Vert u-v\Vert_{\mathbb{X}}\ \ \text{for all}\ u,v\in U(x).$$
\end{definition}
\begin{definition}
Given a locally Lipschitz function $\Psi:\mathbb{X}\rightarrow \mathbb{R}$, the "generalized directional derivative" of $\Psi$ at $u\in\mathbb{X}$ in the direction $v\in \mathbb{X}$, denoted by $\tilde{\Psi}(u;v)$, is defined by
$$\tilde{\Psi}(u;v)=\mathop{\limsup_{x\rightarrow u}}_{t\searrow 0}\frac{\Psi(x+tv)-\Psi(x)}{t}.$$
\end{definition}
\begin{definition}
The "generalized subdifferential" of $\Psi$ at $u\in\mathbb{X}$ is the set $\partial\Psi(u)\subseteq \mathbb{X}^*$ given by
$$\partial\Psi(u):=\left\lbrace \Psi^*\in \mathbb{X}^*:\langle \Psi^*,v\rangle_{\mathbb{X}}\leq \tilde{\Psi}(u;v),\ \ \text{for all}\ v\in \mathbb{X}\right\rbrace.$$
\end{definition}
The Hahn-Banach theorem implies that $\partial \Psi (u)\neq \emptyset$ for all $u\in\mathbb{X}$, it is convex and $w^*$-compact ( in  weak topology sense ). If $\Psi$ is also convex, then it coincides with the subdifferential in the sense of convex functionals, see \cite{420}. If $\Psi\in C^{1}(\mathbb{X},\mathbb{R})$, then $\partial \Psi (u)=\left\lbrace \Psi ^{'}(u)\right\rbrace$. Note that the generalized subdifferential has a remarkable calculus, similar to that in the classical derivative, see \cite{418,419,420}.
\begin{rem}\label{4rem1}
$\bullet$ The assumption $(f_4)$ and  the generalized subdifferential calculus of Clarke \cite[ p. 48]{419}, give that  for a.a. $x\in\mathbb{R}^3$, we have
\begin{equation}\label{1}
t\longmapsto \frac{f(x,t)}{\vert t\vert^{3}}\ \ \text{is nondecreasing in}\ \vert t\vert >0
\end{equation}
and
\begin{equation}\label{2}
t\longmapsto f(x,t)t-4F(x,t)\ \ \text{is nondecreasing on}\ \mathbb{R}_+\ \text{and nonincreasing on}\ \mathbb{R}_-.
\end{equation}
$\bullet$ The assumption $(f_3)$ is weaker than the (AR)  condition. Indeed, the  function  $$\displaystyle{f(s)=\vert s\vert^{2}s\ln(1+\vert s\vert)}$$ $($for the sake of simplicity, we drop the $x$-dependence$)$ satisfies hypotheses $(f_1)-(f_4)$ but not the $(AR)$ condition.\\
$\bullet$ By hypothesis $(f_4)$ and the fact that $f(x,0)=0$, for a.a. $x\in \mathbb{R}^3$, we have
\begin{equation}\label{3}
  f(x,t)\geq 0\ (\leq 0),\  \text{for a.a.}\ x\in \mathbb{R}^3\ \text{and all}\ t\geq 0\ (t\leq 0).
\end{equation}
  Therefore,
  \begin{equation}\label{4}
  F(x,t)=\int_0^t f(x,s)ds \geq 0,\ \text{for a.a.}\ x\in \mathbb{R}^3\ \text{and all}\ t\geq 0.
  \end{equation}
  On the other side, if $t<0$, by \eqref{1} and \eqref{3}, for a.a. $x\in \mathbb{R}^3$, we have
\begin{align*}
  F(x,t)&=\int_0^t f(x,s)ds=\int_0^t \frac{f(x,s)}{\vert s\vert^{3}}\vert s\vert^{3} ds\geq \frac{f(x,t)}{\vert t\vert^{3}}\int_0^t \vert s\vert^{3} ds=\frac{1}{4}f(x,t)t \geq 0.
\end{align*}
It follows, by \eqref{4}, that
\begin{equation}\label{5}
F(x,t)\geq 0,\ \text{for a.a.}\ x\in \mathbb{R}^3\ \text{and all}\ t\in \mathbb{R}.
\end{equation}
\end{rem}

\section{Ground state solution}
In this section, we prove the existence of a weak solution of \eqref{P} which minimizes $J_{\left \vert_ \mathcal{N}\right.}$. Such a solution is known as
a "ground state solution".
\begin{lemma}\label{lem3}
Assume that the assumptions  $(f_1)-(f_4)$ and $(V_0)$ hold. Then, for each $u\in H^1_V(\mathbb{R}^3)\setminus\lbrace 0\rbrace$, there exists a unique $t_u>0$ such that $t_u\in\mathcal{N}$.
\end{lemma}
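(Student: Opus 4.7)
My plan is to analyze the fibering map $\psi_u:[0,\infty)\to\mathbb{R}$, $\psi_u(t):=J(tu)$, and show it possesses a unique positive critical point $t_u$; since $\langle J'(tu),tu\rangle = t\psi_u'(t)$, this yields the unique $t_u>0$ with $t_u u\in\mathcal{N}$. Using $\phi_{tu}=t^2\phi_u$ from Lemma \ref{lem1}$(9)$, the fiber reads
\[
\psi_u(t)=\frac{t^2}{2}\|u\|^2+\frac{q^2t^4}{4}\int_{\mathbb{R}^3}\phi_u u^2\,dx-\int_{\mathbb{R}^3}F(x,tu)\,dx,
\]
and inherits $C^1$ regularity from $J$.

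For existence I note $\psi_u(0)=0$. The standard subcritical bound $|F(x,s)|\le\varepsilon s^2+C_\varepsilon|s|^6$ extracted from $(f_1)$–$(f_2)$, combined with Theorem \ref{thm3} and the nonnegativity of $\int\phi_u u^2\,dx$ (Lemma \ref{lem1}$(2)$), gives $\psi_u(t)\ge c_1 t^2-c_2 t^6>0$ for small $t>0$. Assumption $(f_3)$ together with \eqref{5} and Fatou's lemma forces $t^{-4}\int F(x,tu)\,dx\to+\infty$, so $\psi_u(t)\to-\infty$. By continuity $\psi_u$ then attains a positive global maximum at some $t_u>0$ with $\psi_u'(t_u)=0$, and hence $t_u u\in\mathcal{N}$.

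For uniqueness, I rescale the Nehari equation $\psi_u'(t)=0$ by dividing by $t^3$, obtaining
\[
\eta(t):=\frac{\|u\|^2}{t^2}+q^2\int_{\mathbb{R}^3}\phi_u u^2\,dx-\int_{\mathbb{R}^3}\frac{f(x,tu(x))\,u(x)}{t^3}\,dx=0.
\]
The first term is strictly decreasing, the middle one constant; the crux is that the last integral is nondecreasing in $t>0$. Pointwise where $u(x)\ne 0$, property \eqref{3} of Remark \ref{4rem1} ensures $f(x,tu)$ and $u$ share a sign, allowing the rewriting
\[
\frac{f(x,tu(x))\,u(x)}{t^3}=\frac{|f(x,tu(x))|}{|tu(x)|^3}\,|u(x)|^4.
\]
The ratio-monotonicity encoded in \eqref{1} of Remark \ref{4rem1} (i.e.\ $|f(x,\cdot)|/|\cdot|^3$ nondecreasing in modulus, which is exactly the Clarke-calculus consequence of $(f_4)$) then makes this integrand nondecreasing in $t$ for a.a.\ $x$, hence the whole integral is. Consequently $\eta$ is strictly decreasing and admits at most one zero, so $t_u$ is unique.

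The genuine obstacle is this uniqueness step: because $f$ is only locally Lipschitz, pointwise differentiation is unavailable and one must route $(f_4)$ through Clarke's calculus to produce the ratio-monotonicity stored in Remark \ref{4rem1}. Once that is in hand, the existence half is entirely standard, and the Bopp--Podolsky scaling $\phi_{tu}=t^2\phi_u$ makes the quartic in $t$ contribution cooperate with the nonlinear part.
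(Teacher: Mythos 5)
Your proof is correct and follows essentially the same route as the paper's: the same fibering map, with uniqueness obtained by dividing the Nehari equation by $t^3$ and using the monotonicity of $|f(x,s)|/|s|^{3}$ in $|s|$ that $(f_4)$ yields via Clarke's calculus (Remark \ref{4rem1}). You additionally spell out the existence of a positive critical point (positivity of the fiber for small $t$ and divergence to $-\infty$ via $(f_3)$), a step the paper's proof of this lemma leaves implicit and only records afterwards in Lemma \ref{6lem4} and Remark \ref{rem11}.
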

\begin{proof} Let $u\in H^1_V(\mathbb{R}^3)\setminus\lbrace 0\rbrace$, we define the fibering map $h_u:(0,+\infty)\longrightarrow \mathbb{R}$ by
\begin{equation}\label{5eq2}
    h_u(t):=J(tu),\ \ \text{for all}\ t>0.
    \end{equation}
 From Lemma \ref{lem1}, for all  $t\in (0,+\infty)$, we have
\begin{align}
h_u(t)& =J(tu)=\frac{1}{2}\Vert tu\Vert^2+\frac{q^2}{4}\int_{\mathbb{R}^3}\phi_{tu}(x) (tu)^2dx-\int_{\mathbb{R}^3}F(x,tu)dx\nonumber\\
& =\frac{1}{2}t^2\Vert  u\Vert^2+\frac{q^2}{4}t^4\int_{\mathbb{R}^3}\phi_{u} u^2dx-\int_{\mathbb{R}^3}F(x,tu)dx.
\end{align}
It's clear that $h_u\in C^1\left((0,+\infty),\mathbb{R}\right)$ and
\begin{equation}\label{eqc}
h_u^{'}(t)=t\Vert  u\Vert^2+q^2t^3\int_{\mathbb{R}^3}\phi_{u} u^2dx-\int_{\mathbb{R}^3}f(x,tu)udx.
\end{equation}
Evidently,
$$h_u^{'}(t)=0\Longleftrightarrow tu\in \mathcal{N}.$$
So, the equation $h_u^{'}(t)=0$ is equivalent to
\begin{equation}\label{5eq3}
0<q^2\int_{\mathbb{R}^3}\phi_{u} u^2dx=\int_{\mathbb{R}^3}\frac{f(x,tu)}{ t^3} udx-\frac{1}{t^2}\Vert  u\Vert^2.
\end{equation}
Obviously, that from  \eqref{1} the right hand side of equation \eqref{5eq3} is increasing.\\
Therefore,  there exists a unique $t_u>0$ such that
\begin{equation}\label{eqa}
h_u^{'}(t_u)=0,
\end{equation}
thus,
$$J^{'}(t_uu)u=0\Rightarrow J^{'}(t_uu)t_uu=0 \Rightarrow t_uu\in \mathcal{N}.$$
This ends the proof.
\end{proof}

\begin{lemma}\label{6lem4}
Assume that  assumptions  $(f_1)-(f_4)$  and $(V_0)$ hold. Then, for each $u\in \mathcal{N}$
$$J(tu)\leq J(u),\ \ \text{for all}\ \ t>0.$$
\end{lemma}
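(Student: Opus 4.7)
The plan is to reduce the statement to a claim about the fibering map $h_u(t):=J(tu)$ already analyzed in the proof of Lemma \ref{lem3}: when $u\in\mathcal{N}$ this map achieves its global maximum on $(0,+\infty)$ at $t=1$.

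First I would observe that, since $u\in\mathcal{N}$ means $\langle J'(u),u\rangle=0$, formula \eqref{eqc} gives immediately $h_u'(1)=0$. Combined with the uniqueness assertion in Lemma \ref{lem3}, this forces $t_u=1$, so $t=1$ is the only positive critical point of $h_u$.

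The next step is to pin down the sign of $h_u'$ on $(0,1)$ and on $(1,+\infty)$. Rewriting \eqref{eqc} as in \eqref{5eq3}, one has $h_u'(t)=0$ if and only if
\begin{equation*}
q^{2}\int_{\mathbb{R}^{3}}\phi_{u}u^{2}\,dx \;=\; \int_{\mathbb{R}^{3}}\frac{f(x,tu)\,u}{t^{3}}\,dx \;-\; \frac{\|u\|^{2}}{t^{2}}.
\end{equation*}
The left-hand side is constant in $t$, whereas the right-hand side is \emph{strictly increasing} in $t>0$: the term $-\|u\|^{2}/t^{2}$ is trivially strictly increasing, and the integrand $f(x,tu(x))u(x)/t^{3}$ is pointwise nondecreasing in $t$ by the monotonicity \eqref{1} of Remark \ref{4rem1} (after splitting on the sign of $u(x)$ and using \eqref{3} to handle the negative part). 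Equivalently, $h_u'(t)/t^{3}$ is strictly decreasing on $(0,+\infty)$; since $h_u'(1)=0$, this yields $h_u'(t)>0$ on $(0,1)$ and $h_u'(t)<0$ on $(1,+\infty)$.

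Finally, the sign information shows that $h_u$ is strictly increasing on $(0,1]$ and strictly decreasing on $[1,+\infty)$, so $h_u(t)\le h_u(1)=J(u)$ for every $t>0$, which is exactly the claim. The only mildly delicate point is the pointwise monotonicity of $f(x,tu(x))u(x)/t^{3}$, but this is essentially already used implicitly in Lemma \ref{lem3}, so no genuinely new obstacle is introduced here.
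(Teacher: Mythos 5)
Your argument is correct, but it takes a genuinely different route from the paper's. The paper proves the lemma by a global-shape argument: it shows $h_u(t)>0$ for $t>0$ small (via the growth estimate \eqref{5eq1} and Theorem \ref{thm3}) and $\limsup_{t\to+\infty}h_u(t)/t^4=-\infty$ (via $(f_3)$ and Fatou's lemma), concludes that $h_u$ attains an interior global maximum $t_0$, and then identifies $t_0=1$ through the uniqueness of the critical point from Lemma \ref{lem3}. You instead read off the sign of $h_u'$ directly: dividing \eqref{eqc} by $t^3$ and invoking the monotonicity \eqref{1}, the quantity $h_u'(t)/t^3$ is strictly decreasing (strictly, because $-\|u\|^2/t^2$ is strictly increasing and $\|u\|>0$ for $u\in\mathcal N$), and since it vanishes at $t=1$ it is positive on $(0,1)$ and negative on $(1,+\infty)$. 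This is precisely the monotonicity fact the paper already uses in Lemma \ref{lem3} to get uniqueness of $t_u$, so your proof recycles it, dispenses entirely with $(f_3)$, Fatou's lemma and the Sobolev estimates, and obtains the stronger conclusion of Remark \ref{rem11} (monotone increase on $(0,1)$, decrease on $(1,+\infty)$) in passing, whereas the paper re-derives that remark separately from the global-shape argument. One remark on your ``mildly delicate point'': the pointwise monotonicity of $t\mapsto f(x,tu(x))u(x)/t^3$ is cleanest if you write it as $\frac{f(x,tu)}{|tu|^3}\,|u|^3u$ and use that $s\mapsto f(x,s)/|s|^3$ is nondecreasing as a function of $s$ on each half-line --- which is what $(f_4)$ actually yields and what the paper itself uses, e.g. in \eqref{h3}; a literal reading of \eqref{1} as monotonicity in $|t|$ would give the wrong sign where $u(x)<0$, so the case split on the sign of $u(x)$ that you mention is genuinely needed, and it does work out.
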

\begin{proof}
We consider the fibering map $h_u(\cdot)$ introduced in the proof of Lemma \ref{lem3}. Since $u\in \mathcal{N}$, by  Lemma \ref{lem3}  we have $h_u^{'}(1)=0$ and $t=1$ is the unique critical point of $h_u(\cdot)$.

From assumptions $(f_1)-(f_2)$, we infer that for all $\varepsilon>0$, there is $C_\varepsilon>0$ such that
\begin{equation}\label{5eq1}
\vert f(x,t)\vert\leq \varepsilon\vert t\vert +C_\varepsilon\vert t\vert^5\ \ \text{and}\ \ \vert F(x,t)\vert\leq \varepsilon\vert t\vert^2 +C_\varepsilon\vert t\vert^6,\ \ \text{for all}\ \ t\in \mathbb{R}, \ \text{and all}\ x\in \mathbb{R}^3.
\end{equation}
By \eqref{5eq2}, \eqref{5eq1} and Theorem \ref{thm3}, we get
\begin{align}\label{eqd}
   h_u(t)& =\frac{1}{2}t^2\Vert  u\Vert^2+t^4\frac{q^2}{4}\int_{\mathbb{R}^3}\phi_{u} u^2dx-\int_{\mathbb{R}^3}F(x,tu)dx\nonumber\\
   & \geq \frac{1}{2}t^2\Vert  u\Vert^2+t^4\frac{q^2}{4}\int_{\mathbb{R}^3}\phi_{u} u^2dx-\varepsilon t^2\int_{\mathbb{R}^3}\vert u\vert^2dx-C_\varepsilon t^6\int_{\mathbb{R}^3}\vert u\vert^6dx\nonumber\\
   & \geq \frac{1}{2}t^2\Vert  u\Vert^2-\varepsilon t^2\Vert u\Vert^2_2-C_\varepsilon t^6\Vert u\Vert_6^6\nonumber\\
   & \geq \frac{1}{2}t^2\Vert  u\Vert^2-\varepsilon C_1 t^2\Vert u\Vert^2-C_\varepsilon C_2 t^6\Vert u\Vert^6\nonumber\\
   & \geq \left(\frac{1}{2}-\varepsilon C_1\right)t^2\Vert  u\Vert^2-C_\varepsilon C_2 t^6\Vert u\Vert^6.
\end{align}
Choosing $\varepsilon=\frac{1}{4C_1}$, we find that
\begin{equation}\label{5eq5}
 h_u(t)=J(tu)>0,\ \ \text{for all}\ t\in (0,1)\ \text{small enough}.
\end{equation}
In light of the definition of $h_u(\cdot)$ and \eqref{5}, we see that
\begin{align*}
     \frac{h_u(t)}{t^4}& \leq \frac{1}{2t^2}\Vert  u\Vert^2+\frac{q^2}{4}\int_{\mathbb{R}^3}\phi_{u} u^2dx-\int_{A}\frac{F(x,tu)}{ \vert tu\vert^4}\vert u\vert^4dx,\\
\end{align*}
where $A \subset\text{Supp}(u)$ is a measurable  set with  positive measure. Hence, using Fatou's lemma and assumption $(f_3)$, we deduce that
\begin{equation}\label{eqb}
\limsup_{t\rightarrow +\infty}\frac{h_u(t)}{t^4}\leq \frac{q^2}{4}\int_{\mathbb{R}^3}\phi_{u} u^2dx- \liminf_{t\rightarrow +\infty}\int_{A}\frac{F(x,tu)}{ \vert tu\vert^4}\vert u\vert^4dx=-\infty.
\end{equation}
It follows, by \eqref{5eq5} and the continuity of $h_u(\cdot)$, that $h_u(\cdot)$ has a global maximum point $t_0\in (0,+\infty)$. Therefore, $t_0$ is a critical point of $h_u(\cdot)$. Thus, $t_0=1$  and
$$h_u(t)\leq h_u(1),\ \ \text{for all}\ t\in (0,+\infty).$$
This gives the proof.
\end{proof}
\begin{rem}\label{rem11}
Let $u\in H^1_V(\mathbb{R}^3)\setminus\lbrace 0\rbrace$ and $h_u(\cdot)$ defined as in the proof of Lemma \ref{lem3}.
 Arguing as in \eqref{5eq5} and \eqref{eqb}, we get
$$h_u(t)>0,\ \ \text{for all}\ t>0\ \text{small enough}$$
and
$$\lim\limits_{t\rightarrow +\infty}h_u(t)=-\infty.$$
These facts combined with \eqref{eqa}, give that
$$h_u(t)\ \text{increasing in}\ (0,t_u)\ \ \text{and}\ \ h_u(t)\ \text{decreasing in}\ (t_u,+\infty).$$
Thus, the unique $t_u>0$ in Lemma \ref{lem3} verifies
$$h^{'}_u(t)>0\ \text{in}\ (0,t_u)\ \text{and}\ h^{'}_u(t)<0\ \text{in}\ (t_u,+\infty).$$
\end{rem}
Next, let
$$c_0:=\inf_{u\in \mathcal{N}}J(u).$$
\begin{lemma}\label{lem6}
Assume that assumptions $(f_1)-(f_4)$ and $(V_0)$ hold. Then, $c_0>0$.
\end{lemma}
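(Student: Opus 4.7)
The plan is to bound the energy $J$ from below on $\mathcal{N}$ in two stages: first showing that the Nehari manifold stays uniformly away from the origin in $H^1_V(\mathbb{R}^3)$, and then exploiting the monotonicity consequence of $(f_4)$ to trade the Nehari constraint against the ground state energy.

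\textbf{Step 1: Positive distance of $\mathcal{N}$ from the origin.} For any $u \in \mathcal{N}$ the relation $\langle J'(u),u\rangle = 0$ reads
$$\|u\|^2 + q^2 \int_{\mathbb{R}^3} \phi_u u^2\, dx = \int_{\mathbb{R}^3} f(x,u)u\, dx.$$
I would apply the growth bound $|f(x,t)| \le \varepsilon |t| + C_\varepsilon |t|^5$ from \eqref{5eq1} (consequence of $(f_1)$--$(f_2)$) together with the embeddings of Theorem \ref{thm3}, getting $\int f(x,u)u \le \varepsilon C_1 \|u\|^2 + C_\varepsilon C_2 \|u\|^6$. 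Since $\phi_u \ge 0$ by Lemma \ref{lem1}(2), the nonlocal term on the left is nonnegative, so
$$\|u\|^2 \le \varepsilon C_1 \|u\|^2 + C_\varepsilon C_2 \|u\|^6.$$
Choosing $\varepsilon = 1/(2C_1)$ yields $\|u\|^4 \ge 1/(2 C_\varepsilon C_2) =: \rho^4 > 0$, so every $u \in \mathcal{N}$ satisfies $\|u\| \ge \rho$.

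\textbf{Step 2: Lower bound for $J$ on $\mathcal{N}$.} The key observation is that relation \eqref{2} in Remark \ref{4rem1} gives $f(x,t)t - 4F(x,t) \ge 0$ for all $t \in \mathbb{R}$, because this map is nondecreasing on $\mathbb{R}_+$ and nonincreasing on $\mathbb{R}_-$ and vanishes at $t=0$. Using the standard Nehari trick,
$$J(u) = J(u) - \frac{1}{4}\langle J'(u),u\rangle = \frac{1}{4}\|u\|^2 + \int_{\mathbb{R}^3}\Bigl[\frac{1}{4} f(x,u)u - F(x,u)\Bigr] dx \ge \frac{1}{4}\|u\|^2.$$
Note that the nonlocal quartic term cancels exactly, which is precisely why a hypothesis weaker than \textrm{(AR)} (but still giving the sign of $f(x,t)t - 4F(x,t)$) suffices. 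Combining with Step 1 gives $J(u) \ge \rho^2/4$ for every $u \in \mathcal{N}$, whence $c_0 \ge \rho^2/4 > 0$.

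\textbf{Main obstacle.} The only subtle point is extracting the pointwise inequality $f(x,t)t \ge 4F(x,t)$ from the weak condition $(f_4)$, since $f$ is merely locally Lipschitz. This is exactly where the generalized subdifferential calculus enters, via the monotonicity statement \eqref{2} recorded in Remark \ref{4rem1}; once that is in hand, the rest is a straightforward Nehari-type estimate together with the Sobolev embeddings. The positivity of $\phi_u u^2$ (Lemma \ref{lem1}(2)) is what makes Step 1 work without further information on the Bopp–Podolsky nonlocal term.
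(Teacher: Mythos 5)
Your proof is correct, but it follows a genuinely different route from the paper's. The paper first establishes the mountain-pass-type bound $J(u)\geq \frac14\Vert u\Vert^2-C_3\Vert u\Vert^6$ for arbitrary $u$, deduces $J\geq\eta>0$ on a small sphere $\Vert u\Vert=\rho$, and then, for $u\in\mathcal{N}$, scales $u$ onto that sphere and invokes Lemma \ref{6lem4} (the fibering map $h_u$ attains its maximum at $t=1$) to conclude $J(u)\geq J(\tau u)\geq\eta$. You instead combine two ingredients that the paper only uses later: the uniform bound $\Vert u\Vert\geq\rho$ on $\mathcal{N}$ (this is exactly \eqref{7eq88+}--\eqref{eq7715+} in Lemma \ref{lem5+}, proved by the same $\varepsilon$-argument you give) and the Nehari identity $J(u)=J(u)-\frac14\langle J'(u),u\rangle=\frac14\Vert u\Vert^2+\int_{\mathbb{R}^3}\left[\frac14 f(x,u)u-F(x,u)\right]dx$, whose integrand is nonnegative because the map in \eqref{2} vanishes at $t=0$ and is nondecreasing on $\mathbb{R}_+$, nonincreasing on $\mathbb{R}_-$ (this identity is the one the paper exploits in Proposition \ref{4prop5}). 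Your version buys an explicit quantitative bound $c_0\geq\rho^2/4$ and avoids Lemma \ref{6lem4} altogether (hence does not need $(f_3)$ here), at the price of invoking the $(f_4)$-consequence \eqref{2}; the paper's version needs only the growth conditions $(f_1)$--$(f_2)$ directly but leans on the global maximality of the fibering map. Both arguments are sound and both sets of ingredients are available in the paper, so your proof is a legitimate alternative.
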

\begin{proof}
Let $u\in H^1_V(\mathbb{R}^3)\setminus\lbrace 0\rbrace$. By  \eqref{5eq1} and Theorem \ref{thm3}, we get
\begin{align*}
   J(u)& =\frac{1}{2}\Vert  u\Vert^2+\frac{q^2}{4}\int_{\mathbb{R}^3}\phi_{u} u^2dx-\int_{\mathbb{R}^3}F(x,u)dx\\
   & \geq \frac{1}{2}\Vert  u\Vert^2-\varepsilon \int_{\mathbb{R}^3}\vert u\vert^2dx-C_\varepsilon \int_{\mathbb{R}^3}\vert u\vert^6dx\\
   & =\frac{1}{2}\Vert  u\Vert^2-\varepsilon \Vert  u\Vert_2^2-C_\varepsilon \Vert  u\Vert_6^6\\
   & \geq \frac{1}{2}\Vert  u\Vert^2-\varepsilon C_1\Vert  u\Vert^2-C_\varepsilon C_2\Vert  u\Vert^6\\
   & = \left(\frac{1}{2}-\varepsilon C_1\right)\Vert  u\Vert^2-C_\varepsilon C_2\Vert  u\Vert^6.
\end{align*}
Choosing $\varepsilon=\frac{1}{4C_1}$, we infer that
$$J(u)\geq \frac{1}{4} \Vert  u\Vert^2-C_3 \Vert  u\Vert^6.$$
Therefore, there exist $\rho\in (0,1)$  small enough  and $\eta>0$  such that
$$J(u)\geq \eta ,\ \ \text{for all}\ \ \Vert u\Vert=\rho.$$ Let $u\in \mathcal{N}$, choosing $\tau>0$ such that $\Vert \tau u\Vert=\rho$. Exploiting Lemma \ref{6lem4}, we deduce that
$$J(u)\geq J(\tau u)\geq \eta>0.$$
This completes the proof.
\end{proof}
\begin{lemma}\label{lem5+}
Assume that assumptions $(f_1)-(f_4)$  and $(V_0)$ hold. Let $\lbrace u_n\rbrace_{n\in\mathbb{N}}\subset \mathcal{N}$ such that $u_n\rightharpoonup u$ in $H_V^{1}(\mathbb{R}^3)$, then $u\not\equiv 0.$
\end{lemma}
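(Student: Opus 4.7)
The plan is to argue by contradiction: assume $u\equiv 0$ and show that this forces $\|u_n\|\to 0$, which will conflict with a uniform lower bound on $\|u_n\|$ coming from membership in $\mathcal N$.

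First I would establish a lower bound: there exists $\delta>0$ such that $\|u_n\|\geq \delta$ for every $n$. Since $u_n\in\mathcal N$, the identity $\langle J'(u_n),u_n\rangle=0$ together with the nonnegativity of $\int_{\mathbb R^3}\phi_{u_n}u_n^2\,dx$ (Lemma \ref{lem1}(2)) gives
\begin{equation*}
\|u_n\|^2 \;\leq\; \|u_n\|^2+q^2\!\int_{\mathbb R^3}\phi_{u_n}u_n^2\,dx \;=\; \int_{\mathbb R^3} f(x,u_n)u_n\,dx.
\end{equation*}
Applying the growth estimate \eqref{5eq1} with $\varepsilon=\frac{1}{2C_1}$ and Theorem \ref{thm3} (for $r=2,6$), the right-hand side is bounded by $\tfrac{1}{2}\|u_n\|^2+C_\varepsilon C_2\|u_n\|^6$, which rearranges to $\|u_n\|^4\geq \tfrac{1}{2C_\varepsilon C_2}$. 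This yields the desired uniform lower bound $\delta>0$.

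Next, I would use the contradiction hypothesis $u\equiv 0$. The weak convergence $u_n\rightharpoonup 0$ in $H_V^1(\mathbb R^3)$, together with Lemma \ref{9lem1}(3), gives
\begin{equation*}
\lim_{n\to+\infty}\int_{\mathbb R^3} f(x,u_n)u_n\,dx \;=\; 0.
\end{equation*}
Combining this with the Nehari identity $\|u_n\|^2+q^2\int_{\mathbb R^3}\phi_{u_n}u_n^2\,dx=\int_{\mathbb R^3}f(x,u_n)u_n\,dx$ and again using nonnegativity of the Bopp--Podolsky term, we conclude $\|u_n\|\to 0$. This contradicts $\|u_n\|\geq\delta>0$, so $u\not\equiv 0$.

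The only delicate step is the lower bound on $\|u_n\|$; the rest is a routine application of the compact embedding supplied by Theorem \ref{thm3}. No structural obstacle is expected, since the argument mirrors the computation already carried out in Lemma \ref{lem6}.
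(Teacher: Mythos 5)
Your proposal is correct and follows essentially the same route as the paper: both establish the uniform lower bound $\|u_n\|\geq\tau>0$ from the Nehari constraint, the nonnegativity of the Bopp--Podolsky term, and the growth estimate, and both then invoke Lemma \ref{9lem1}(3) to pass $\int_{\mathbb{R}^3}f(x,u_n)u_n\,dx$ to the limit. The only cosmetic difference is that you phrase the conclusion as a contradiction ($u\equiv0$ would force $\|u_n\|\to0$), while the paper directly deduces $\tau^2\leq\int_{\mathbb{R}^3}f(x,u)u\,dx$ and hence $u\not\equiv0$.
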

\begin{proof}
First let us observe that, there is $\tau>0$ such that
\begin{equation}\label{7eq88+}
    \Vert v\Vert\geq \tau,\ \ \text{for all}\ v\in \mathcal{N}.
\end{equation}
Indeed: Let $v\in\mathcal{N}$. Then,
\begin{align*}
  \int_{\mathbb{R}^3}\vert\nabla v\vert^2 dx +\int_{\mathbb{R}^3} V(x)v^2dx+q^2\int_{\mathbb{R}^3} \phi_v v^2dx=\int_{\mathbb{R}^3}f(x,v)vdx.
\end{align*}
It follows, by Lemma \ref{lem1}, that
\begin{align}\label{8eq99+}
    \Vert v\Vert^2\leq \int_{\mathbb{R}^3}f(x,v)vdx.
\end{align}
On the other side, from assumptions $(f_1)-(f_2)$, for all $\varepsilon>0$ there is a constant $C_\varepsilon > 0$ such that
\begin{equation}\label{8eq999}
 \vert f(x,t)t\vert\leq \varepsilon \vert t\vert^{2}+C_\varepsilon  \vert t\vert^{6},\ \ \text{for a.a.}\ x\in \mathbb{R}^3\ \text{and all}\ t\in \mathbb{R}.
\end{equation}
Using Theorem \ref{thm1}, \eqref{8eq99+} and \eqref{8eq999}, we find that
\begin{align*}
   \Vert v\Vert^2 &\leq \varepsilon\int_{\mathbb{R}^3} \vert v\vert^2dx+C_\varepsilon\int_{\mathbb{R}^3} \vert v\vert^6dx\\
    & \leq \varepsilon C_1 \Vert v\Vert^2+ C_\varepsilon C_2 \Vert v\Vert^6,\ \text{for some constants}\ C_1,C_2>0.
\end{align*}
 Choosing $\varepsilon=\frac{1}{2C_1}$ in the previous inequality, we get
$$\frac{1}{2}\Vert v\Vert^2\leq C_\varepsilon C_2 \Vert v\Vert^6.$$
Thus,
\begin{equation}\label{eq7715+}
    \tau\leq \Vert v\Vert,\ \text{where}\ \tau=\left(\frac{1}{2C_{\varepsilon}C_2}\right)^{\frac{1}{4}}.
 \end{equation}
Since $\lbrace u_n\rbrace_{n\in\mathbb{N}}\subset \mathcal{N}$, by \eqref{8eq99+} and \eqref{eq7715+}, we have
$$\tau^2\leq \int_{\mathbb{R}^3}f(x,u_n)u_n dx,\ \ \text{for all}\ \  n\in \mathbb{N}.$$
Hence, passing to the limit as $n\rightarrow +\infty$ in the
previous inequality and using Lemma \ref{9lem1}, we see that
$$\tau^{2}\leq \int_{\mathbb{R}^3}f(x,u)u dx.$$
Therefore, $u\not\equiv 0$. This completes the proof.
\end{proof}
In the following proposition, we prove that the infimum of $J$ is attained on $\mathcal{N}$.

\begin{prop}\label{4prop5}
Assume that assumptions $(f_1)-(f_4)$ and $(V_0)$ are satisfied. Then, there exists $\widehat{u}\in\mathcal{N}$ such that $J(\widehat{u})=c_0$.
\end{prop}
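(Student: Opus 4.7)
The plan is to apply the direct method on $\mathcal{N}$. Take a minimizing sequence $\{u_n\}_{n\in\mathbb{N}}\subset \mathcal{N}$ with $J(u_n)\to c_0$. The first job is to prove boundedness of $\{u_n\}$ in $H^1_V(\mathbb{R}^3)$ without invoking the (AR) condition. For this I would use the key monotonicity consequence \eqref{2} of $(f_4)$ derived in Remark \ref{4rem1}, which yields $f(x,t)t-4F(x,t)\geq 0$ for a.a. $x\in\mathbb{R}^3$ and all $t\in\mathbb{R}$ (since the map vanishes at $t=0$). Since $\langle J'(u_n),u_n\rangle=0$, writing
$$J(u_n)=J(u_n)-\frac{1}{4}\langle J'(u_n),u_n\rangle=\frac{1}{4}\|u_n\|^2+\frac{1}{4}\int_{\mathbb{R}^3}\bigl[f(x,u_n)u_n-4F(x,u_n)\bigr]dx\geq \frac{1}{4}\|u_n\|^2,$$
boundedness of $\{J(u_n)\}$ forces $\{u_n\}$ to be bounded.

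Next, by reflexivity of $H^1_V(\mathbb{R}^3)$ and Theorem \ref{thm3}, up to a subsequence $u_n\rightharpoonup \widehat{u}$ in $H^1_V(\mathbb{R}^3)$ with strong convergence in $L^r(\mathbb{R}^3)$ for $r\in[2,6)$. Since $\{u_n\}\subset\mathcal{N}$, Lemma \ref{lem5+} immediately gives $\widehat{u}\not\equiv 0$, so by Lemma \ref{lem3} there is a unique $t>0$ with $t\widehat{u}\in\mathcal{N}$; hence $J(t\widehat{u})\geq c_0$.

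To obtain the matching upper bound, I would show $J(t\widehat{u})\leq c_0$. The weak lower semicontinuity of the norm gives $\|t\widehat{u}\|^2\leq \liminf_{n\to\infty}\|tu_n\|^2$; Lemma \ref{lem1}(8) (applied to the scaled sequence $tu_n\rightharpoonup t\widehat{u}$, together with the homogeneity $\phi_{tu}=t^2\phi_u$) yields $\int_{\mathbb{R}^3}\phi_{t\widehat{u}}(t\widehat{u})^2dx=\lim_{n\to\infty}\int_{\mathbb{R}^3}\phi_{tu_n}(tu_n)^2dx$; and Lemma \ref{9lem1}(1) applied to $\{tu_n\}$ gives $\int_{\mathbb{R}^3}F(x,tu_n)dx\to \int_{\mathbb{R}^3}F(x,t\widehat{u})dx$. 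Combining these three facts,
$$J(t\widehat{u})\leq \liminf_{n\to\infty}J(tu_n).$$
Since $u_n\in\mathcal{N}$, Lemma \ref{6lem4} gives $J(tu_n)\leq J(u_n)$ for all $n$, so $\liminf_{n\to\infty}J(tu_n)\leq \lim_{n\to\infty}J(u_n)=c_0$. Thus $J(t\widehat{u})=c_0$, and relabeling $\widehat{u}:=t\widehat{u}$ produces the desired minimizer on $\mathcal{N}$.

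The main obstacle I expect is the boundedness step, where the usual (AR)-based argument is unavailable. The argument hinges on turning the pointwise subdifferential condition $(f_4)$ into the integrated inequality $f(x,t)t-4F(x,t)\geq 0$ via Clarke's calculus (Remark \ref{4rem1}); without this, neither boundedness nor the comparison $J(tu_n)\leq J(u_n)$ used in the upper-bound step would be available. A secondary care point is ensuring that all the convergences (of the nonlocal term via Lemma \ref{lem1}(8), and of the nonlinearity via Lemma \ref{9lem1}) apply to the scaled sequence $\{tu_n\}$, which follows from the homogeneity of $\phi_u$ in $u$ and linearity of the weak convergence.
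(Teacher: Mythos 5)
Your proof is correct, and it differs from the paper's in two places worth noting. First, for boundedness of the minimizing sequence the paper does \emph{not} use the identity $J(u_n)-\tfrac14\langle J'(u_n),u_n\rangle=\tfrac14\Vert u_n\Vert^2+\int_{\mathbb{R}^3}\bigl[\tfrac14 f(x,u_n)u_n-F(x,u_n)\bigr]dx$; instead it argues by contradiction, normalizing $v_n=u_n/\Vert u_n\Vert$, ruling out $v\equiv 0$ via Lemma \ref{6lem4}, and then deriving $J(u_n)/\Vert u_n\Vert^4\to-\infty$ from $(f_3)$ and Fatou's lemma. Your route is shorter and rests on exactly the ingredient you identify: \eqref{2} together with the vanishing of $t\mapsto f(x,t)t-4F(x,t)$ at $t=0$ gives $f(x,t)t-4F(x,t)\geq 0$, so $J(u_n)\geq\tfrac14\Vert u_n\Vert^2$ on $\mathcal{N}$; this is sound (the paper itself uses the same identity and sign information later, in the step showing $t_{\widehat u}=1$), and it is the more standard argument when the $4$-monotonicity is available. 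Second, after obtaining $J(t\widehat u)=c_0$ you simply relabel $\widehat u:=t\widehat u$, whereas the paper invests an extra page in proving $t_{\widehat u}=1$ (via \eqref{1} to exclude $t_{\widehat u}>1$ and via \eqref{2} plus Fatou to exclude $t_{\widehat u}<1$), so that the minimizer is the weak limit itself. For the statement as written --- mere existence of a minimizer on $\mathcal{N}$ --- your relabeling suffices, and the subsequent Proposition \ref{4prop9} only uses that $\widehat u\in\mathcal{N}$ attains $c_0$, so nothing downstream is lost. The sandwich argument in your upper-bound step (weak lower semicontinuity of the norm, Lemma \ref{lem1}(8) for the nonlocal term, Lemma \ref{9lem1} for the nonlinearity, and Lemma \ref{6lem4} for $J(tu_n)\leq J(u_n)$) coincides with the paper's inequality \eqref{4eq108}.
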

\begin{proof}
Let $\lbrace u_n\rbrace_{n\in\mathbb{N}}\subset \mathcal{N}$ such that
\begin{equation}\label{4eq85}
J(u_n)\longrightarrow c_0\ \ \text{as}\ \ n\longrightarrow +\infty.
\end{equation}
\textbf{Claim:} The sequence $\lbrace u_n\rbrace_{n\in\mathbb{N}}$ is bounded in $H^{1}_V(\mathbb{R}^3).$
Indeed, we argue by contradiction, assume that there exists a subsequence, denoted again by $\lbrace u_n\rbrace_{n\in\mathbb{N}}$ such that
$$\Vert u_n\Vert \longrightarrow +\infty\ \text{as}\ n\longrightarrow +\infty.$$
Let
\begin{equation}\label{4eq300}
v_n:=\frac{u_n}{\Vert u_n\Vert},\ \ \text{for all}\ n\in\mathbb{N}.
\end{equation}
It follows that the sequence $\lbrace v_n\rbrace_{n\in\mathbb{N}}$ is bounded in $H^{1}_V(\mathbb{R}^3)$. Thus, up to a subsequence still denoted by $v_n$, there exists $v\in H^{1}_V(\mathbb{R}^3)$ such that
\begin{equation}\label{4eq55}
v_n\rightharpoonup v \ \text{in}\ H^{1}_V(\mathbb{R}^3),\ \ \text{and}\ \ v_n(x)\rightarrow v(x)\ \text{as}\ n\rightarrow+\infty,\ \text{for a.a. in}\ \mathbb{R}^{3}.
\end{equation}
 Using Lemmas \ref{lem1}, \ref{6lem4}, and the fact that  $\lbrace u_n\rbrace_{n\in\mathbb{N}} \in \mathcal{N}$, for all $t\geq 0$, we have
\begin{align}\label{4eq9}
J(u_n) & = J(\Vert u_n\Vert v_n )\geq J(t v_n)\nonumber\\
& = \frac{1}{2} \Vert tv_n\Vert^2+\frac{q^2}{4}\int_{\mathbb{R}^3}\phi_{tv_n} (x)(tv_n)^2dx-\int_{\mathbb{R}^3}F(x,tv_n)dx\nonumber\\
& \geq  \frac{1}{2} t^2  -\int_{\mathbb{R}^{3}}F(x,tv_n) dx.
\end{align}
Assume that $v_n\rightharpoonup v\equiv0$,  by Lemma \ref{9lem1}, we see that
\begin{equation}\label{8eq1}
\int_{\mathbb{R}^{3}}F(x,tv_n) dx\longrightarrow 0,\ \text{for all}\ t>0,\ \ \text{as}\ n\longrightarrow +\infty.
\end{equation}
 Passing to the limit in \eqref{4eq9} as $n\longrightarrow +\infty$, and using \eqref{4eq85} and \eqref{8eq1}, we get
$$ \frac{1}{2}t^2\leq c_0<+\infty,\ \text{for all}\ t\geq0.$$
Therefore, $v$ could not be zero ($v\not\equiv 0$).

Using \eqref{4eq300} and Lemma \ref{lem1}-(10), we obtain
\begin{align*}
J( u_n) & =J(\Vert u_n\Vert v_n )\\
 & =\frac{1}{2} \left\Vert \Vert u_n\Vert v_n\right\Vert^2+\frac{q^2}{4}\int_{\mathbb{R}^3}\phi_{\Vert u_n\Vert v_n}(x) (\Vert u_n\Vert v_n)^2dx-\int_{\mathbb{R}^3}F(x,\Vert u_n\Vert v_n)dx\nonumber\\
 & = \frac{1}{2}  \Vert u_n\Vert ^2+\frac{q^2}{4}\Vert u_n\Vert ^4\int_{\mathbb{R}^3}\phi_{v_n} v_n^2dx-\int_{\mathbb{R}^3}F(x,\Vert u_n\Vert v_n)dx.\nonumber\\
 & \leq \frac{1}{2}  \Vert u_n\Vert ^2+\frac{q^2C}{4a}\Vert u_n\Vert ^4-\int_{\mathbb{R}^3}F(x,\Vert u_n\Vert v_n)dx.
\end{align*}
It follows that
\begin{align}\label{eq6}
\frac{J( u_n)}{\Vert u_n\Vert^{4} } & \leq \frac{1}{2\Vert u_n\Vert^{2}} +\frac{q^2C}{4a}-\int_{\mathbb{R}^{3}}\frac{F(x,\Vert u_n\Vert v_n)}{\Vert u_n\Vert^{4}} dx.
\end{align}
Exploiting assumption $(f_3)$, Fatou's lemma and the fact that $v\not\equiv 0$, we infer that
\begin{align}\label{eq7}
\liminf_{n\rightarrow +\infty}\int_{\mathbb{R}^{3}}\frac{F(x,\Vert u_n\Vert v_n)}{\Vert u_n\Vert^{4}} dx & =\liminf_{n\rightarrow +\infty}\int_{\mathbb{R}^{3}}\frac{F(x,\Vert u_n\Vert v_n)}{\left( \Vert u_n\Vert \vert v_n\vert\right) ^{4}}\vert v_n\vert^{4} dx =+\infty.
\end{align}
Thus, from \eqref{eq6}, it yields that
$$\frac{J( u_n)}{\Vert u_n\Vert^{4}}\longrightarrow - \infty\ \text{as}\ \ n\longrightarrow +\infty,$$
which is a contradiction with \eqref{4eq85}. Therefore, $\lbrace u_n\rbrace_{n\in\mathbb{N}}$ is bounded in $H^{1}_V(\mathbb{R}^3)$. This completes the proof of Claim.\\
Hence, up to a subsequence, there exists $\widehat{u}\in H^{1}_V(\mathbb{R}^3)$  such that
\begin{equation}\label{4eq555}
u_n\rightharpoonup\widehat{u}\ \text{in}\ H^{1}_V(\mathbb{R}^3),\ \ \text{and}\ \ u_n(x)\longrightarrow \widehat{u}(x)\ \text{ as}\ n\longrightarrow +\infty,\ \text{for a.a.}\ x\in\mathbb{R}^3.
\end{equation}
It follows, from Lemma \ref{lem5+}, that $$\widehat{u} \not\equiv 0.$$ Thus, according to Lemma \ref{lem3}, there exists a unique $t_{\widehat{u}}>0$ such that
\begin{equation}\label{4eq122}
t_{\widehat{u}}\widehat{u}\in \mathcal{N}.
\end{equation}
By \eqref{4eq555}, Lemmas \ref{lem1}, \ref{9lem1}, \ref{6lem4}, and Fatou's lemma,  it follows that
\begin{align}\label{4eq108}
c_0=\lim_{n\rightarrow +\infty}J(u_n)& \geq \liminf_{n\rightarrow +\infty} J(t_{\widehat{u}}u_n)\geq J(t_{\widehat{u}}\widehat{u})\nonumber\\
& \geq c_0.
\end{align}
Therefore, $c_0=J(t_{\widehat{u}}\widehat{u})=\inf\limits_{\mathcal{N}}J.$\\
Now, we show that $t_{\widehat{u}}=1$. Indeed, since $\lbrace u_n\rbrace_{n\in\mathbb{N}}\in\mathcal{N}$,
\begin{align*}
\int_{\mathbb{R}^3}\vert\nabla u_n\vert^2 dx +\int_{\mathbb{R}^3} V(x)u_n^2dx+q^2\int_{\mathbb{R}^3} \phi_{u_n} u_n^2dx=\int_{\mathbb{R}^3}f(x,u_n)u_ndx,\ \text{for all}\ n\in\mathbb{N}.
\end{align*}
By \eqref{4eq555}, Lemma \ref{lem1}-(8),  Fatou's lemma, and Lemma \ref{9lem1}, we deduce that
\begin{align}\label{4eq120}
\int_{\mathbb{R}^3}\vert\nabla \widehat{u}\vert^2 dx +\int_{\mathbb{R}^3} V(x)\widehat{u}^2dx+q^2\int_{\mathbb{R}^3} \phi_{\widehat{u}} \widehat{u}^2dx\leq \int_{\mathbb{R}^3}f(x,\widehat{u})\widehat{u}dx.
\end{align}
Suppose that $t_{\widehat{u}}>1$.
From \eqref{4eq122} and Lemma \ref{lem1}, one has
\begin{align}\label{4eq800}
\displaystyle{\int_{\mathbb{R}^{3}}\frac{f(x,t_{\widehat{u}}\widehat{u})\vert \widehat{u}\vert^{3}\widehat{u}}{\vert t_{\widehat{u}}\widehat{u}\vert^{3}}\ dx} & = \frac{1}{t_{\widehat{u}}^2}\left(\int_{\mathbb{R}^3}\vert\nabla \widehat{u}\vert^2 dx +\int_{\mathbb{R}^3} V(x)\widehat{u}^2dx\right)+q^2\int_{\mathbb{R}^3} \phi_{\widehat{u}} \widehat{u}^2dx.
\end{align}
Subtracting \eqref{4eq800} from \eqref{4eq120}, and using  \eqref{1}, we find that
\begin{align}\label{h3}
 0<\int_{\mathbb{R}^{3}}\left[ \frac{f(x,\widehat{u})}{\vert \widehat{u}\vert^{3}}- \frac{f(x,t_{\widehat{u}}\widehat{u})}{\vert t_{\widehat{u}} \widehat{u}\vert^{3}}\right]\vert \widehat{u}\vert^{3}\widehat{u} \ dx\leq 0,
\end{align}
which is a contradiction. Therefore, $0<t_u\leq 1$.\\
Suppose that $t_{\widehat{u}}\neq 1$. Using \eqref{2}, Lemma \ref{9lem1} and Fatou's lemma, we see that
\begin{align*}
c_0&=J(t_{\widehat{u}}\widehat{u})=J(t_{\widehat{u}}\widehat{u})-\frac{1}{4}\langle J^{'}(t_{\widehat{u}}\widehat{u}),t_{\widehat{u}}\widehat{u}\rangle\\
& =\frac{1}{2} \Vert t_{\widehat{u}}\widehat{u}\Vert^2+\frac{q^2}{4}\int_{\mathbb{R}^3}\phi_{t_{\widehat{u}}\widehat{u}}(x) (t_{\widehat{u}}\widehat{u})^2dx-\int_{\mathbb{R}^3}F(x,t_{\widehat{u}}\widehat{u})dx-\frac{1}{4} \Vert t_{\widehat{u}}\widehat{u}\Vert^2\\
&-\frac{q^2}{4}\int_{\mathbb{R}^3}\phi_{t_{\widehat{u}}\widehat{u}}(x) (t_{\widehat{u}}\widehat{u})^2dx+\frac{1}{4}\int_{\mathbb{R}^3} f(x,t_{\widehat{u}}\widehat{u})t_{\widehat{u}}\widehat{u}dx\\
&=\frac{1}{4} \Vert t_{\widehat{u}}\widehat{u}\Vert^2+\int_{\mathbb{R}^3} \left[ \frac{1}{4}f(x,t_{\widehat{u}}\widehat{u})t_{\widehat{u}}\widehat{u}-F(x,t_{\widehat{u}}\widehat{u}) \right]dx\\
& < \frac{1}{4} \Vert \widehat{u}\Vert^2+\int_{\mathbb{R}^3} \left[ \frac{1}{4}f(x,\widehat{u})\widehat{u}-F(x,\widehat{u}) \right]dx\\
&\leq\liminf_{n\rightarrow +\infty}\left( \frac{1}{4} \Vert u_n\Vert^2+\int_{\mathbb{R}^3} \left[ \frac{1}{4}f(x,u_n)u_n-F(x,u_n) \right]dx\right) \\
& =\liminf_{n\rightarrow +\infty}\left(J(u_n)-\frac{1}{4}\langle J^{'}(u_n),u_n\rangle\right)
=\lim\limits_{n\rightarrow +\infty} J(u_n)=J(\widehat{u})\\
&=c_0.
\end{align*}
Thus, a contradiction holds. Consequently, $t_{\widehat{u}}=1$. Hence,
$$c_0=J(\widehat{u})=\inf\limits_{\mathcal{N}}J.$$
This completes the proof.
\end{proof}
The next proposition shows that the Nehari manifold $\mathcal{N}$ is a natural constraint for $\widehat{u}$.
\begin{prop}\label{4prop9}
Assume that assumptions  $(f_1)-(f_4)$ and $(V_0)$ hold. Then, $\widehat{u}$ is a critical point of $J$.
\end{prop}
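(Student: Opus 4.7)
The plan is to argue by contradiction. Suppose $J'(\widehat{u}) \neq 0$; then there exists $v \in H^1_V(\mathbb{R}^3)$ with $\|v\| = 1$ and $\delta > 0$ such that $\langle J'(\widehat{u}), v\rangle \leq -2\delta$. Setting $\gamma(\tau) := \widehat{u} + \tau v$ for $\tau \geq 0$ small, the path $\gamma(\tau)$ stays nontrivial, and Lemma \ref{lem3} yields a unique $t(\tau) > 0$ such that $t(\tau)\gamma(\tau) \in \mathcal{N}$. The aim is to show that $J(t(\tau)\gamma(\tau)) < c_0$ for small $\tau > 0$, contradicting $t(\tau)\gamma(\tau) \in \mathcal{N}$ together with $c_0 = \inf_{\mathcal{N}} J$ from Proposition \ref{4prop5}.

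The first key step is to prove that $t(\tau) \to 1$ as $\tau \to 0^+$. Boundedness of $\{t(\tau)\}$ away from $0$ follows from the uniform lower bound $\|w\| \geq \tau_*$ for $w \in \mathcal{N}$ established inside the proof of Lemma \ref{lem5+}, which forces $t(\tau) \geq \tau_*/\|\gamma(\tau)\|$. An upper bound is obtained from Remark \ref{rem11} combined with $J(t(\tau)\gamma(\tau)) \geq c_0 > 0$: if $t(\tau_n) \to +\infty$ along a subsequence, then $J(t(\tau_n)\gamma(\tau_n)) \to -\infty$, a contradiction. Along any subsequence with $t(\tau_n) \to t^*$, passing to the limit in $h'_{\gamma(\tau_n)}(t(\tau_n)) = 0$ and using continuity of the fibering structure (including $u\mapsto \phi_u$ from Lemma \ref{lem1} and continuity of the Nemitski operator associated with $f$) gives $h'_{\widehat{u}}(t^*) = 0$, so the uniqueness clause of Lemma \ref{lem3} forces $t^* = 1$.

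With $t(\tau) \to 1$ established, I would finish by combining Lemma \ref{6lem4} with the $C^1$ regularity of $J$. Since $J' \in C(H^1_V(\mathbb{R}^3), (H^1_V(\mathbb{R}^3))^*)$, uniform continuity on compact sets yields $\tau_0, \varepsilon_0 > 0$ such that
\[
\langle J'(t\gamma(\sigma)), v\rangle < -\delta\qquad \text{for all } (\sigma,t)\in [0,\tau_0]\times [1-\varepsilon_0, 1+\varepsilon_0].
\]
The fundamental theorem of calculus then gives
\[
J(t\gamma(\tau)) = J(t\widehat{u}) + t\int_0^\tau \langle J'(t\gamma(\sigma)), v\rangle\,d\sigma \leq J(t\widehat{u}) - \delta\, t\,\tau,
\]
and Lemma \ref{6lem4} bounds $J(t\widehat{u}) \leq J(\widehat{u}) = c_0$ for every $t > 0$. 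Evaluating at $t = t(\tau) \in [1-\varepsilon_0, 1+\varepsilon_0]$ for $\tau \in (0, \tau_0]$ produces $J(t(\tau)\gamma(\tau)) \leq c_0 - \delta(1-\varepsilon_0)\tau < c_0$, the sought contradiction.

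The main technical obstacle is the convergence $t(\tau) \to 1$: because $f$ is only locally Lipschitz in the second variable, the implicit function theorem is not directly available to yield differentiability of $t(\cdot)$ at $\tau = 0$, so one must rely on the soft compactness arguments sketched above, exploiting the qualitative coercivity of $h_u$ from Remark \ref{rem11} and the uniqueness of its critical point from Lemma \ref{lem3}. A convenient aspect of this approach is that the argument bypasses the need for $\mathcal{N}$ to be a $C^1$-manifold (which would otherwise require differentiability of $u\mapsto \int f(x,u)u\,dx$), leveraging instead only the $C^1$ character of $J$ itself.
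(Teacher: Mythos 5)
Your argument is correct, but it takes a genuinely different route from the paper. The paper treats $\mathcal{N}$ as a nonsmooth constraint: it introduces $\xi(u):=\langle J'(u),u\rangle$, invokes Clarke's nonsmooth multiplier rule \cite{418} to get $0\in\partial(J+\lambda\xi)(\widehat{u})$ with $\lambda\ge 0$, computes the representation of $\langle\xi^*_{\widehat{u}},\widehat{u}\rangle$ explicitly, and uses $(f_4)$ to show $\langle\xi^*_{\widehat{u}},\widehat{u}\rangle\le -2\Vert\widehat{u}\Vert^2<0$, which forces $\lambda=0$ and hence $J'(\widehat{u})=0$. You instead run the Szulkin--Weth-type descent argument: assuming $J'(\widehat{u})\neq 0$, you push $\widehat{u}$ in a descent direction, reproject onto $\mathcal{N}$ via the unique fibering parameter of Lemma \ref{lem3}, prove $t(\tau)\to 1$ by the compactness/uniqueness argument (using the uniform bound $\Vert w\Vert\ge\tau_*$ on $\mathcal{N}$ from the proof of Lemma \ref{lem5+} for the lower bound, the coercivity of $h_u$ as in Remark \ref{rem11} for the upper bound, and uniqueness of the critical point of $h_{\widehat{u}}$ for the identification of the limit), and then combine the fundamental theorem of calculus with Lemma \ref{6lem4} to drive the energy strictly below $c_0$, contradicting Proposition \ref{4prop5}. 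Your steps all check out: the identity $J(t\gamma(\tau))=J(t\widehat{u})+t\int_0^\tau\langle J'(t\gamma(\sigma)),v\rangle\,d\sigma$ is legitimate because $J\in C^1$, the set $\{t\gamma(\sigma)\}$ over the compact parameter rectangle is compact in $H^1_V(\mathbb{R}^3)$ so the continuity argument for the uniform bound $<-\delta$ is sound, and the passage to the limit in $h'_{\gamma(\tau_n)}(t(\tau_n))=0$ only needs continuity of $u\mapsto\int\phi_u u^2$ and of the Nemytskii term under strong convergence, both available from Lemma \ref{lem1} and the growth bound \eqref{5eq1}. What each approach buys: the paper's proof is short once the Clarke subdifferential machinery is in place (and that machinery is reused verbatim for Proposition \ref{4prop14}), but it requires justifying the representation formula for $\partial\xi$ and the sum rule for subdifferentials; your proof avoids differentiating (even in the generalized sense) the constraint functional altogether and uses only the $C^1$ character of $J$ together with the structural lemmas already proved, at the cost of a somewhat longer compactness argument for $t(\tau)\to 1$. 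Note that $(f_4)$ still enters your proof, but only indirectly through the monotonicity \eqref{1} underlying the uniqueness in Lemma \ref{lem3}, rather than through the strict sign of $\langle\xi^*_{\widehat{u}},\widehat{u}\rangle$.
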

\begin{proof}
First, we consider the functional $\xi:H^{1}_V(\mathbb{R}^3)\longrightarrow \mathbb{R}$ defined by
\begin{align*}
\xi(u) & :=\langle J^{'}(u),u\rangle =\Vert u\Vert^2+q^2\int_{\mathbb{R}^3} \phi_u u^2dx-\int_{\mathbb{R}^3}f(x,u)udx.
\end{align*}
According to assumptions  $(f_1)-(f_4)
$, $\xi$  is locally Lipschitz (see \cite[Theorem 2.7.2, p. 221]{419}).  Then, the generalized subdifferential of $\xi(\cdot)$ at all $u\in H^{1}_V(\mathbb{R}^3)$ is the set $\partial\xi(u)\subseteq (H^{1}_V(\mathbb{R}^3))^*$ such that for all $\xi_{u}^*\in \partial\xi(u)$ there exists $f^*(x,\cdot)\in \partial f(x,\cdot)$ verifies
\begin{align}\label{4eq500}
\langle\xi^*_u,v\rangle
& =2\int_{\mathbb{R}^3}\nabla u.\nabla v dx+2\int_{\mathbb{R}^3} V(x)uvdx- \int_{\mathbb{R}^{3}} f(x,u)v dx- \int_{\mathbb{R}^{3}} f^*(x,u)uv dx\nonumber\\
& + 2q^2\int_{\mathbb{R}^3}\int_{\mathbb{R}^3}\frac{1-e^{-\frac{\vert y-x\vert}{a}}
   }{\vert y-x\vert}u(y)v(y)u(x)^2dydx +2q^2\int_{\mathbb{R}^3} \phi_u uvdx,
\end{align} for all $v\in H^{1}_V(\mathbb{R}^3)$.\\
From the definition of $\xi(\cdot)$ and Proposition \ref{4prop5}, we have
$$J(\widehat{u})=c_0=\inf\left\lbrace J(u):\ \xi(u)=0,\ u\in H^{1}_V(\mathbb{R}^3)\backslash\lbrace 0\rbrace \right\rbrace.$$
Using the non-smooth multiplier rule of Clarke \cite[Theorem 10.47, p. 221]{418}, we find $\lambda\geq 0$  such that
$$
0\in\partial(J+ \lambda \xi)(\widehat{u}).
$$
 By the subdifferential calculus of Clarke \cite[p. 48]{419}, it follows that

$$0\in\partial J(\widehat{u})+\lambda\partial\xi(\widehat{u}).$$
Thus,
\begin{equation}\label{4eq11}
0=J^{'}(\widehat{u})+\lambda \xi^*_{\widehat{u}}\ \ \text{in}\ (H^{1}_V(\mathbb{R}^3))^*,\ \ \text{for all}\ \xi^*_{\widehat{u}}\in \partial\xi(\widehat{u}).
\end{equation}
 Since $\widehat{u}\in \mathcal{N}$, we have
 \begin{equation}\label{4eq10}
0=\langle J^{'}(\widehat{u}),\widehat{u}\rangle+\lambda\langle \xi^{*}_{\widehat{u}},\widehat{u}\rangle =\lambda\langle \xi^{*}_{\widehat{u}},\widehat{u}\rangle,\ \ \text{for all}\ \xi^*_{\widehat{u}}\in \partial\xi(\widehat{u}).
\end{equation}
Using \eqref{4eq500}, assumption $(f_4)$, and the fact that $\widehat{u}\in\mathcal{N}$, for all $f^*(x,\cdot)\in \partial f(x,\cdot)$, we get
\begin{align}\label{4eq12}
 \langle \xi^{*}_{\widehat{u}},\widehat{u}\rangle
 & = 2\int_{\mathbb{R}^3}\vert \nabla \widehat{u}\vert^2dx+2\int_{\mathbb{R}^3} V(x)\widehat{u}^2dx- \int_{\mathbb{R}^{3}} f(x,\widehat{u})\widehat{u} dx- \int_{\mathbb{R}^{3}} f^*(x,\widehat{u})\widehat{u}^{2} dx\\
& + 2q^2\int_{\mathbb{R}^3}\int_{\mathbb{R}^3}\frac{1-e^{-\frac{\vert y-x\vert}{a}}
   }{\vert y-x\vert}\widehat{u}(y)^2\widehat{u}(x)^2dydx +2q^2\int_{\mathbb{R}^3} \phi_{\widehat{u}} \widehat{u}^2dx\nonumber\\
 & = 2\int_{\mathbb{R}^3}\vert \nabla \widehat{u}\vert^2dx+2\int_{\mathbb{R}^3} V(x)\widehat{u}^2dx- \int_{\mathbb{R}^{3}} f(x,\widehat{u})\widehat{u} dx- \int_{\mathbb{R}^{3}} f^*(x,\widehat{u})\widehat{u}^{2} dx\\
& + 4q^2\int_{\mathbb{R}^3} \phi_{\widehat{u}} \widehat{u}^2dx\nonumber\\
& = -2\int_{\mathbb{R}^3}\vert \nabla\widehat{u}\vert^2dx-2\int_{\mathbb{R}^3} V(x)\widehat{u}^2dx+3 \int_{\mathbb{R}^{3}} f(x,\widehat{u})\widehat{u} dx- \int_{\mathbb{R}^{3}} f^*(x,\widehat{u})\widehat{u}^{2} dx\nonumber\\
& = -2\Vert \widehat{u}\Vert+ \int_{\mathbb{R}^{3}} 3f(x,\widehat{u})\widehat{u}-f^*(x,\widehat{u})\widehat{u}^{2} dx\nonumber\\
& <0.
\end{align}
It follows, from \eqref{4eq10}, that $\lambda=0$. Therefore, by \eqref{4eq11}, we deduce that
$$J^{'}(\widehat{u})=0\ \text{in}\ (H^{1}_V(\mathbb{R}^3))^*.$$
Hence, $\widehat{u}$ is a critical point of $J$, so, it is a weak solution of problem \eqref{P}.\\
Thus the proof.
\end{proof}
\begin{proof}[\textbf{Proof of Theorem \ref{thm1} :}]  Theorem \ref{thm1} deduced from the Propositions \ref{4prop5} and \ref{4prop9}.
\end{proof}
\section{Least energy nodal solution}
In this section, we establish the existence of a least energy nodal solution for problem \eqref{P} and  we give the proof of Theorem \ref{thm2}.\\

In order to find a least energy nodal solution for problem \eqref{P}, we look for a minimizer of the energy functional $J$ on the constraint  $\mathcal{M}$ ( where $\mathcal{M}$ was defined in Sect. 2 ). Let's consider the following minimization problem
\begin{equation}\label{M1}
c_1:=\inf_{\mathcal{M}}J.
\end{equation}
\begin{rem}\label{rem4}
Since $\mathcal{M}\subset\mathcal{N}$, by Lemma \ref{lem6}, we have
$$c_1=\inf_{\mathcal{M}}J\geq \inf_{\mathcal{N}}J=c_{0}>0.$$
\end{rem}
\begin{lemma}\label{4lem11}
Assume that assumptions $(f_1)-(f_4)$ and $(V_0)$ hold. Then, for each $w\in H^1_V(\mathbb{R}^{3})$ such that $w^{\pm}\not\equiv 0$, there exists a unique pair $(t_{w^+},s_{w^-})\in (0,+\infty)\times(0,+\infty)$ such that
$$t_{w^+}w^++s_{w^-}w^-\in\mathcal{M}.$$
\end{lemma}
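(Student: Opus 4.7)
My plan is to reduce membership in $\mathcal{M}$ to a $2\times 2$ scalar system, establish existence via Poincar\'e--Miranda on a square $[r,R]^2$, and prove uniqueness by a case analysis using the monotonicity in Remark~\ref{4rem1}. Set $u := tw^+ + sw^-$ for $t,s > 0$. Since $w^+w^- = 0$ a.e., $u^+ = tw^+$, $u^- = sw^-$, $u^2 = t^2(w^+)^2 + s^2(w^-)^2$, and by linearity of the convolution with $\mathcal{K}$ (Lemma~\ref{lem1}), $\phi_u = t^2\phi_{w^+} + s^2\phi_{w^-}$. With $A := \Vert w^+\Vert^2$, $B := \Vert w^-\Vert^2$, $C := \int_{\mathbb{R}^3}\phi_{w^+}(w^+)^2 dx$, $D := \int_{\mathbb{R}^3}\phi_{w^-}(w^-)^2 dx$, and $E := \int_{\mathbb{R}^3}\phi_{w^+}(w^-)^2 dx = \int_{\mathbb{R}^3}\phi_{w^-}(w^+)^2 dx > 0$, the conditions $\langle J'(u),u^\pm\rangle = 0$ read
\begin{align*}
g_1(t,s) &:= t^2 A + q^2 t^4 C + q^2 E t^2 s^2 - \int_{\mathbb{R}^3}f(x,tw^+)\,tw^+\,dx = 0,\\
g_2(t,s) &:= s^2 B + q^2 s^4 D + q^2 E t^2 s^2 - \int_{\mathbb{R}^3}f(x,sw^-)\,sw^-\,dx = 0.
\end{align*}

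For existence I apply Poincar\'e--Miranda on $[r,R]^2$. Estimate \eqref{5eq1} together with Theorem~\ref{thm3} shows $g_1(r,s) > 0$ and $g_2(t,r) > 0$ uniformly on the square when $r$ is small (the quadratic $r^2 A$, $r^2 B$ terms dominate). On the opposite faces, Remark~\ref{4rem1}(2) gives $f(x,t)t \geq 4F(x,t)$ for $t > 0$, so $(f_3)$ together with Fatou forces $R^{-4}\int_{\mathbb{R}^3}f(x,Rw^+)Rw^+\,dx \to +\infty$ as $R \to \infty$, while the remaining positive contributions to $R^{-4} g_1(R,s)$ stay bounded uniformly in $s\in[r,R]$ (note $s/R \leq 1$). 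Thus $g_1(R,s) < 0$ for $R$ large, and symmetrically $g_2(t,R) < 0$, so Poincar\'e--Miranda supplies $(t_{w^+},s_{w^-})\in (r,R)^2$ with $g_1 = g_2 = 0$.

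For uniqueness, dividing $g_1 = 0$ by $t^4$ and $g_2 = 0$ by $s^4$ gives
\begin{align*}
\frac{A + q^2 E s^2}{t^2} + q^2 C &= \int_{\mathbb{R}^3}\frac{f(x,tw^+)}{(tw^+)^3}(w^+)^4\,dx =: \tilde H_+(t),\\
\frac{B + q^2 E t^2}{s^2} + q^2 D &= \int_{\mathbb{R}^3}\frac{f(x,sw^-)}{(sw^-)^3}(w^-)^4\,dx =: \tilde H_-(s),
\end{align*}
with $\tilde H_\pm$ nondecreasing by Remark~\ref{4rem1}(1). Let $(t_1,s_1),(t_2,s_2)$ both solve the system. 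The case $t_1 = t_2$ forces $s_1 = s_2$ through the first identity (since $E > 0$), and symmetrically. If $t_1 < t_2$ while $s_1 \geq s_2$, the LHS of the first identity at $(t_1,s_1)$ strictly exceeds that at $(t_2,s_2)$, contradicting monotonicity of $\tilde H_+$. If $t_1 < t_2$ and $s_1 < s_2$, rearranging $\tilde H_+(t_1) \leq \tilde H_+(t_2)$ yields $A(t_2^2 - t_1^2) \leq q^2 E(t_1^2 s_2^2 - t_2^2 s_1^2)$, hence $t_1 s_2 > t_2 s_1$; the identical manipulation on $\tilde H_-$ produces $t_2 s_1 > t_1 s_2$, a contradiction. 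The main obstacle is the nonlocal coupling $q^2 E t^2 s^2$ appearing in both equations, which prevents a one-dimensional fibering argument \`a la Lemma~\ref{lem3} from being run coordinate-wise; the remedy exploits the match between the exponent $4$ of the nonlocal term and the cubic monotonicity from $(f_4)$, so that the cross-term factors cleanly after division by $t^4$ or $s^4$.
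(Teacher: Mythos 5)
Your proof is correct, and its skeleton -- Poincar\'e--Miranda for existence, the monotonicity of $t\mapsto f(x,t)/|t|^3$ from $(f_4)$ for uniqueness -- is the same as the paper's. The execution differs in both halves, in ways that actually streamline the argument. For existence, the paper first establishes the sign conditions only on the diagonal ($\xi_i(t,t)>0$ for small $t$, $<0$ for large $t$) and then propagates them to the edges of the square via an auxiliary monotonicity claim ($\xi_1$ nondecreasing in $s$, $\xi_2$ nondecreasing in $t$, coming from the cross term $q^2E\,t^2s^2$); you instead verify the edge conditions directly and uniformly, observing that the cross term has a favorable sign on the faces $t=r$, $s=r$ and is dominated after dividing by $R^4$ on the faces $t=R$, $s=R$, so the auxiliary claim is not needed. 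For uniqueness, the paper normalizes one solution to $(1,1)$ (splitting into the cases $w\in\mathcal{M}$ and $w\notin\mathcal{M}$), assumes WLOG $t_2\le s_2$ to get one-sided bounds on the nonlocal cross terms, and rules out six sign configurations; your argument avoids the normalization entirely by dividing the two equations by $t^4$ and $s^4$ and deriving the incompatible product inequalities $t_1s_2>t_2s_1$ and $t_2s_1>t_1s_2$ in the remaining case. Both uniqueness arguments rest on the same ingredients ($E>0$, which requires $q\neq 0$ and the strict positivity of the kernel $\mathcal{K}$, together with \eqref{1}); yours packages the case analysis more cleanly.
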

\begin{proof}
Let $\xi:\ (0,+\infty)\times  (0,+\infty) \longrightarrow \mathbb{R}^{2}$ be a continuous vector field given by
$$\xi(t,s)=\big{(}\xi_{1}(t,s),\xi_{2}(t,s)\big{)},\ \ \text{for all}\ \ t,s \in (0,+\infty)\times (0,+\infty)$$
where $$\xi_{1}(t,s)=\langle J'(tw^{+}+sw^{-}),tw^{+}\rangle\ \text{ and}\ \xi_{2}(t,s)=\langle J'(tw^{+}+sw^{-}),sw^{-}\rangle.$$
From \eqref{1}, for $t\in (0,1)$, we have
$$\frac{f(x,tu)}{\vert tu\vert^3}\leq \frac{f(x,u)}{\vert u\vert^3},\ \text{for a.a.}\ x\in \mathbb{R}^3,\ \text{with}\ \vert u(x)\vert>0.$$
Hence,
\begin{equation}\label{8eq77}
    f(x,tu(x))u(x)\leq t^3f(x,u(x))u(x),\ \text{for a.a.}\ x\in \mathbb{R}^3.
\end{equation}
It follows, by the definitions of $\xi_1(\cdot,\cdot)$ and $\xi_2(\cdot,\cdot)$, for all $t\in (0,1)$, that
\begin{align*}
  \xi_1(t,t)&= \langle J'(tw),tw^{+}\rangle \\
  & =t^2\left[\int_{\mathbb{R}^3}\vert\nabla w^+\vert^2 dx +\int_{\mathbb{R}^3} V(x)(w^+)^2dx\right]+q^2t^4\int_{\mathbb{R}^3} \phi_w (x)(w^+)^2dx-\int_{\mathbb{R}^3}f(x,tw^+)tw^+dx\\
  & \geq t^2\left[\int_{\mathbb{R}^3}\vert\nabla w^+\vert^2 dx +\int_{\mathbb{R}^3} V(x)(w^+)^2dx\right]+q^2t^4\int_{\mathbb{R}^3} \phi_w (x)(w^+)^2dx-t^4\int_{\mathbb{R}^3}f(x,w^+)w^+dx,
\end{align*}
and
\begin{align*}
  \xi_2(t,t)&= \langle J'(tw),tw^{-}\rangle \\
  & =t^2\left[\int_{\mathbb{R}^3}\vert\nabla w^-\vert^2 dx +\int_{\mathbb{R}^3} V(x)(w^-)^2dx\right]+q^2t^4\int_{\mathbb{R}^3} \phi_w (x)(w^-)^2dx-\int_{\mathbb{R}^3}f(x,tw^-)tw^-dx\\
  & \geq t^2\left[\int_{\mathbb{R}^3}\vert\nabla w^-\vert^2 dx +\int_{\mathbb{R}^3} V(x)(w^-)^2dx\right]+q^2t^4\int_{\mathbb{R}^3} \phi_w (x)(w^-)^2dx-t^4\int_{\mathbb{R}^3}f(x,w^-)w^-dx.
\end{align*}
Thus, there exists $r_1>0$ small enough  such that
\begin{equation}\label{4eq90}
\xi_1(t,t)>0,\ \ \xi_2(t,t)>0,\ \ \text{for all}\ t\in(0,r_1).
\end{equation}
On the other side, for $t>0$, we have
\begin{align}\label{9eq55}
   \frac{\xi_1(t,t)}{t^4}=\frac{1}{t^2}\left[\int_{\mathbb{R}^3}\vert\nabla w^+\vert^2 dx +\int_{\mathbb{R}^3} V(x)(w^+)^2dx\right]+q^2\int_{\mathbb{R}^3} \phi_w(x) (w^+)^2dx-\int_{\mathbb{R}^3}\frac{f(x,tw^+)}{t^4}tw^+dx
\end{align}
and
\begin{align}\label{9eq555}
   \frac{\xi_2(t,t)}{t^4}=\frac{1}{t^2}\left[\int_{\mathbb{R}^3}\vert\nabla w^-\vert^2 dx +\int_{\mathbb{R}^3} V(x)(w^-)^2dx\right]+q^2\int_{\mathbb{R}^3} \phi_w (x)(w^-)^2dx-\int_{\mathbb{R}^3}\frac{f(x,tw^-)}{t^4}tw^-dx.
\end{align}
From \eqref{2} and assumption $(f_3)$, we have
\textcolor{red}{$$\int_{\mathbb{R}^3}\frac{f(x,tw^\pm)}{t^3}w^\pm dx=\int_{\mathbb{R}^3}\frac{f(x,tw^\pm)}{t^4}tw^\pm dx\longrightarrow +\infty\ \text{as}\ t\longrightarrow +\infty.$$}
It follows, by passing to the limit as $t\rightarrow +\infty $ in \eqref{9eq55} and \eqref{9eq555}, that
$$\lim\limits_{t\rightarrow +\infty}\frac{\xi_1(t,t)}{t^4}=\lim\limits_{t\rightarrow +\infty}\frac{\xi_2(t,t)}{t^4}=-\infty.$$
Thus, there exists $R_1>0$ large enough such that
\begin{equation}\label{4eq901}
\xi_1(t,t)<0,\ \ \xi_2(t,t)<0,\ \ \text{for all}\ t\in(R_1,+\infty).
\end{equation}
\textbf{Claim:}
 $\xi_1(t,s)$ is nondecreasing in $s$ on $(0,+\infty)$ for fixed $t>0$ and $\xi_2(t,s)$ is nondecreasing in $t$ on $(0,+\infty)$ for fixed $s>0$. Indeed, Let $t,s_1,s_2>0$  such that $s_1\leq s_2$. By the definition of $\xi_1(\cdot,\cdot)$ and Lemma \ref{lem1}, we have
 \begin{align*}
 \xi_1(t,s_2)-\xi_1(t,s_1)&=\langle J'(tw^{+}+s_2w^{-}),tw^{+}\rangle-\langle J'(tw^{+}+s_1w^{-}),tw^{+}\rangle\\
&=q^2\left[\int_{\mathbb{R}}\phi_{tw^{+}+s_2w^{-}}(x)(tw^+)^2dx-\int_{\mathbb{R}}\phi_{tw^{+}+s_1w^{-}}(x)(tw^+)^2dx\right]\\
&=q^2\left[\int_{\mathbb{R}}\phi_{s_2w^{-}}(x)(tw^+)^2dx-\int_{\mathbb{R}}\phi_{s_1w^{-}}(x)(tw^+)^2dx\right]\\
&=q^2\left[s_2^2t^2\int_{\mathbb{R}}\phi_{w^{-}}(x)(w^+)^2dx-s_1^2t^2\int_{\mathbb{R}}\phi_{w^{-}}(x)(w^+)^2dx\right]\\
&\geq 0.
  \end{align*}
  Therefore, the map $s\mapsto \xi_1(t,s)$ is nondecreasing on $(0,+\infty)$, for a fixed $t>0$. Similarly, we prove that the map $t\mapsto \xi_2(t,s)$ is nondecreasing on $(0,+\infty)$, for a fixed $s>0$.\\
  This ends the proof of Claim.\\

Now, exploiting \eqref{4eq90}, \eqref{4eq901} and the \textbf{Claim}, we could exist $r>0$ and $R>0$ such that $r<R$ and
$$\xi_1(r,s)>0,\ \ \xi_1(R,s)<0,\ \ \text{for all}\ s\in(r,R]$$
and
$$\xi_2(t,r)>0,\ \ \xi_2(t,R)<0,\ \ \text{for all}\ t\in(r,R].$$
It follows, by applying  Miranda's theorem \cite{412} on $\xi$, that there exist some $t_{w^+},s_{w^-} \in (r,R]$ such that $\xi(t_{w^+},s_{w^-})=\left(\xi_1(t_{w^+},s_{w^-}),\xi_2(t_{w^+},s_{w^-})\right)=(0,0)$. Hence, $$t_{w^+}w^++s_{w^-}w^-\in\mathcal{M}.$$

For the uniqueness of the pairs $(t_{w^+},s_{w^-})$, we argue by contradiction. Suppose that there exist two different pairs $(t_1,s_1)$ and $(t_2,s_2)$ such that
$$t_1w^++s_1w^-\in \mathcal{M}\ \text{and}\ t_2w^++s_2w^-\in \mathcal{M}.$$
We distinguish two cases:\\
\textbf{Case 1:} $w\in \mathcal{M}$. Without loss of generality, we may take $(t_1,s_1)=(1,1)$ and assume that $t_2\leq s_2$, we have
\begin{align}\label{4eq210}
\int_{\mathbb{R}^3}\vert\nabla w^+\vert^2 dx +\int_{\mathbb{R}^3} V(x)(w^+)^2dx+q^2\int_{\mathbb{R}^3} \phi_w (x)(w^+)^2dx=\int_{\mathbb{R}^3}f(x,w^+)w^+dx,
\end{align}
\begin{align}\label{4eq2101}
\int_{\mathbb{R}^3}\vert\nabla w^-\vert^2 dx +\int_{\mathbb{R}^3} V(x)(w^-)^2dx+q^2\int_{\mathbb{R}^3} \phi_w (x)(w^-)^2dx=\int_{\mathbb{R}^3}f(x,w^-)w^-dx,
\end{align}
\begin{align}\label{4eq210+}
\int_{\mathbb{R}^3}\vert\nabla t_2w^+\vert^2 dx +\int_{\mathbb{R}^3} V(x)(t_2w^+)^2dx+q^2\int_{\mathbb{R}^3} \phi_{t_2w^++s_2w^-}(x) (t_2w^+)^2dx=\int_{\mathbb{R}^3}f(x,t_2w^+)t_2w^+dx
\end{align}
and
\begin{align}\label{4eq210-}
\int_{\mathbb{R}^3}\vert\nabla s_2w^-\vert^2 dx +\int_{\mathbb{R}^3} V(x)(s_2w^-)^2dx+q^2\int_{\mathbb{R}^3} \phi_{t_2w^++s_2w^-}(x)(s_2w^-)^2dx=\int_{\mathbb{R}^3}f(x,s_2w^-)s_2w^-dx.
\end{align}
On the other hand, from the definition of $\phi_u$, Lemma \ref{lem1} and the fact that $\text{Supp}(w^+)\cap\text{Supp}(w^+)=\emptyset$, we see that
\begin{align}\label{8eq88}
    \int_{\mathbb{R}^3} \phi_{t_2w^++s_2w^-}(x) (t_2w^+)^2dx&=\int_{\mathbb{R}^3} \phi_{t_2w^+}(x) (t_2w^+)^2dx+\int_{\mathbb{R}^3} \phi_{s_2w^-}(x) (t_2w^+)^2dx\nonumber\\
    &=t_2^4\int_{\mathbb{R}^3} \phi_{w^+}(x) (w^+)^2dx+t_2^2s_2^2\int_{\mathbb{R}^3} \phi_{w^-}(x) (w^+)^2dx\nonumber\\
    & \geq t_2^4 \int_{\mathbb{R}^3} \phi_{w}(x) (w^+)^2dx\ (\text{since}\ t_2\leq s_2)
\end{align}
and
\begin{align}\label{8eq888}
    \int_{\mathbb{R}^3} \phi_{t_2w^++s_2w^-}(x) (s_2w^-)^2dx&=\int_{\mathbb{R}^3} \phi_{t_2w^+}(x) (s_2w^-)^2dx+\int_{\mathbb{R}^3} \phi_{s_2w^-}(x) (s_2w^-)^2dx\nonumber\\
    &=t_2^2s_2^2\int_{\mathbb{R}^3} \phi_{w^+}(x) (w^-)^2dx+s_2^4\int_{\mathbb{R}^3} \phi_{w^-}(x) (w^-)^2dx.\nonumber\\
    &\leq s_2^4\int_{\mathbb{R}^3} \phi_{w}(x) (w^-)^2dx\ (\text{since}\ t_2\leq s_2).
\end{align}
Putting together \eqref{4eq210+} with \eqref{8eq88} and \eqref{4eq210-} with \eqref{8eq888}, we obtain respectively
\begin{align}\label{4eq210++}
\frac{1}{t_2^2}\left[\int_{\mathbb{R}^3}\vert\nabla w^+\vert^2 dx +\int_{\mathbb{R}^3} V(x)(w^+)^2dx\right]+q^2\int_{\mathbb{R}^3} \phi_{w}(x) (w^+)^2dx\leq \int_{\mathbb{R}^3}\frac{f(x,t_2w^+)}{\vert t_2 w^+\vert^3}\vert w^+\vert^3w^+dx
\end{align}
and
\begin{align}\label{4eq210--}
\frac{1}{s_2^2}\left[\int_{\mathbb{R}^3}\vert\nabla w^-\vert^2 dx +\int_{\mathbb{R}^3} V(x)(w^-)^2dx\right]+q^2\int_{\mathbb{R}^3} \phi_{w}(x) (w^-)^2dx\geq \int_{\mathbb{R}^3}\frac{f(x,s_2w^-)}{\vert s_2 w^-\vert^3}\vert w^-\vert^3w^-dx.
\end{align}
Now, we shall prove that the following six cases could not happen
\begin{equation*}
  \begin{array}{|ll|}
  \hline
  \   & \ \\
 (1) & t_2=s_2<1.\\
\   & \ \\
\hline
\   & \ \\
(2) & 1<t_2=s_2.\\
\   & \ \\
\hline
\   & \ \\
(3) & 0<t_2<s_2\leq 1.\\
\ & \ \\
\hline
  \end{array}
  \begin{array}{|ll|}
  \hline
  \   & \ \\
 (4) & 1\leq t_2<s_2.\\
\   & \ \\
\hline
\   & \ \\
(5) & 0< t_2\leq 1 <s_2.\\
\   & \ \\
\hline
\   & \ \\
(6) & 0< t_2< 1 \leq s_2.\\
\ & \ \\
\hline
  \end{array}
\end{equation*}
 Suppose that one of the cases $(1)$ or $(3)$ or $(6)$, holds. It follows, by subtracting  \eqref{4eq210} from \eqref{4eq210++}, and using \eqref{1}, that
\begin{align*}
0&< \left(\frac{1}{t_2^2}-1\right)\left[\int_{\mathbb{R}^3}\vert\nabla w^+\vert^2 dx +\int_{\mathbb{R}^3} V(x)(w^+)^2dx\right]\\
& \leq \int_{\mathbb{R}^3}\left[\frac{f(x,t_2w^+)}{\vert t_2w^+\vert^{3}}-\frac{f(x,w^+)}{\vert w^+\vert^{3}} \right]\vert w^+\vert^{3}w^+dx\\
&\leq 0.
\end{align*}
Thus, a contradiction holds. Then, the cases $(1)$, $(3)$ and $(6)$ cannot be realized.\\
Suppose that one of the cases $(2)$ or $(4)$ or $(5)$, holds. It follows, by subtracting  \eqref{4eq2101} from \eqref{4eq210--}, and using \eqref{1}, that
\begin{align*}
0&\leq \int_{\mathbb{R}^3}\left[\frac{f(x,s_2w^-)}{\vert s_2w^-\vert^{3}}-\frac{f(x,w^-)}{\vert w^-\vert^{3}} \right]\vert w^-\vert^{3}w^-dx\\
& \leq \left(\frac{1}{s_2^2}-1\right)\left[\int_{\mathbb{R}^3}\vert\nabla w^-\vert^2 dx +\int_{\mathbb{R}^3} V(x)(w^-)^2dx\right]\\
&<0.
\end{align*}
Which gives also a contradiction. Then, the cases $(2)$, $(4)$ and $(5)$ cannot be happen.
We recap that $(t_1,s_1)=(1,1)=(t_2,s_2)$. \\
\textbf{Case 2:} $w \notin \mathcal{M}$. Let $v=t_1w^++s_1w^-\in \mathcal{M}$, $v^+=t_1w^+$ and $v^-=s_1w^-$, so $(t_1,s_1)\neq (1,1)$. It is clear that
$$t_2w^++s_2w^-=\frac{t_2}{t_1}t_1w^++\frac{s_2}{s_1}s_1w^-=\frac{t_2}{t_1}v^++\frac{s_2}{s_1}v^-\in \mathcal{M}.$$
Proceeding as in \textbf{Case 1}, we conclude that
$$\frac{t_2}{t_1}=\frac{s_2}{s_1}=1.$$
This completes the proof.
\end{proof}

\begin{lemma}\label{4lem16}
Assume that assumptions $(f_1)-(f_4)$ and  $(V_0)$ hold. Then, for all $w\in\mathcal{M}$,
$$J(tw^++sw^-)\leq J(w),\  \text{for all}\ t,s> 0.$$
\end{lemma}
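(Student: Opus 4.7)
The plan is to study the two-variable fibering map $g:[0,+\infty)^2\setminus\{(0,0)\}\to\mathbb{R}$ defined by $g(t,s):=J(tw^++sw^-)$, extended continuously to the axes by $g(0,s)=J(sw^-)$ and $g(t,0)=J(tw^+)$, and to show that it attains its global maximum precisely at $(1,1)$. Since $\mathrm{supp}(w^+)\cap\mathrm{supp}(w^-)$ has Lebesgue measure zero, $(tw^++sw^-)^2=t^2(w^+)^2+s^2(w^-)^2$ and, by linearity of the Bopp--Podolsky convolution, $\phi_{tw^++sw^-}=t^2\phi_{w^+}+s^2\phi_{w^-}$. Setting $A_\pm:=\int_{\mathbb{R}^3}\phi_{w^\pm}(w^\pm)^2\,dx$ and $B:=\int_{\mathbb{R}^3}\phi_{w^+}(w^-)^2\,dx=\int_{\mathbb{R}^3}\phi_{w^-}(w^+)^2\,dx$, a direct expansion yields
\begin{equation*}
g(t,s)=\frac{t^2}{2}\|w^+\|^2+\frac{s^2}{2}\|w^-\|^2+\frac{q^2}{4}\bigl(t^4A_++s^4A_-+2t^2s^2B\bigr)-\int_{\mathbb{R}^3}F(x,tw^+)\,dx-\int_{\mathbb{R}^3}F(x,sw^-)\,dx.
\end{equation*}
Since $w\in\mathcal{M}$, I have $\xi_1(1,1)=\xi_2(1,1)=0$, so $(1,1)$ is a critical point of $g$; by the uniqueness part of Lemma~\ref{4lem11} applied to $w$ itself, it is the only critical point of $g$ in $(0,+\infty)^2$.

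The next step is coercivity: I claim $g(t,s)\to-\infty$ as $\max\{t,s\}\to+\infty$. If $t\to+\infty$, then assumption $(f_3)$ and Fatou's lemma, applied on the positive-measure set $\{w^+>0\}$, give $t^{-4}\int_{\mathbb{R}^3}F(x,tw^+)\,dx\to+\infty$, dominating the $O(t^4+t^2s^2)$ contributions from the $\phi$-term and the quadratic ones; the case $s\to+\infty$ is symmetric, and when both $t,s\to+\infty$ either integral suffices. Hence $g$ attains its supremum somewhere on the closed set $[0,+\infty)^2\setminus\{(0,0)\}$.

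The main step is to rule out the supremum being attained on the coordinate axes. Fix $s^*>0$; a direct cancellation gives
\begin{equation*}
g(t,s^*)-g(0,s^*)=\frac{t^2}{2}\bigl(\|w^+\|^2+q^2s^{*2}B\bigr)+\frac{q^2t^4A_+}{4}-\int_{\mathbb{R}^3}F(x,tw^+)\,dx.
\end{equation*}
From $(f_1)$--$(f_2)$ one has $F(x,r)\leq\varepsilon r^2+C_\varepsilon r^6$, and Theorem~\ref{thm3} bounds the integral by $\varepsilon C_1t^2\|w^+\|^2+C_\varepsilon C_2t^6\|w^+\|^6$; choosing $\varepsilon=\tfrac{1}{4C_1}$ and letting $t$ be small yields $g(t,s^*)>g(0,s^*)$. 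The symmetric estimate handles the $s=0$ axis, and $g(0,0)=0<J(w)=g(1,1)\leq\sup g$ handles the corner. Therefore any maximizer of $g$ lies in $(0,+\infty)^2$, must be a critical point, and hence equals $(1,1)$. This proves $J(tw^++sw^-)=g(t,s)\leq g(1,1)=J(w)$ for all $t,s>0$.

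The main obstacle is precisely this boundary-exclusion step: since $f(x,0)=0$, the first-order derivatives of $g$ vanish at every axis point, so the axes cannot be distinguished from interior critical points by a first-order argument. The quantitative Sobolev control of $\int_{\mathbb{R}^3}F(x,tw^+)\,dx$ by $t^2\|w^+\|^2$ in the small-$t$ regime, combined with the strictly positive leading quadratic terms, is what forces $g$ strictly above its axis values.
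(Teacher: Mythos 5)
Your proof is correct and follows essentially the same route as the paper: both study the two-variable fibering map $(t,s)\mapsto J(tw^++sw^-)$, show it tends to $-\infty$ at infinity, and identify its interior maximizer with $(1,1)$ via the uniqueness statement of Lemma~\ref{4lem11}. Your explicit exclusion of maximizers on the coordinate axes (via the small-$t$ estimate $g(t,s^*)>g(0,s^*)$) is a welcome extra step that the paper's proof asserts only implicitly when it claims the global maximum lies in $(0,+\infty)\times(0,+\infty)$.
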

\begin{proof}
For each $w\in\mathcal{M}$, we consider the fibering map $\mu_w:(0,+\infty)\times (0,+\infty)\longrightarrow \mathbb{R}$ defined by
$$\mu_w(t,s)=J(tw^++sw^-)\ \ \text{for all}\ t,s> 0.$$
In light of Lemma \ref{4lem11} and Remark \ref{rem4},
\begin{equation}\label{4eq994}
\mu_w(0,0)=J(0)=0<c_1\leq \mu_w(1,1)=J(w).
\end{equation}
Let $t,s>0$. Using Lemmas \ref{lem1} and Sobolev embedding theorem, we obtain
\begin{align}\label{4eq18}
\mu_w(t,s)&=\frac{1}{2} \Vert tw^++sw^-\Vert^2+\frac{q^2}{4}\int_{\mathbb{R}^3}\phi_{tw^++sw^-} (tw^++sw^-)^2dx-\int_{\mathbb{R}^3}F(x,tw^++sw^-)dx\nonumber\\
&\leq \frac{1}{2} \Vert tw^++sw^-\Vert^2+\frac{q^2}{4a}\Vert tw^++sw^-\Vert^4_2-\int_{\mathbb{R}^3}F(x,tw^++sw^-)dx\nonumber\\
&\leq \frac{1}{2} (t+s)^2\left(\Vert w^+\Vert+\Vert w^-\Vert\right)^2+\frac{q^2C}{4a}\Vert tw^++sw^-\Vert^4-\int_{\mathbb{R}^3}F(x,tw^++sw^-)dx\nonumber\\
&\leq \frac{1}{2} (t+s)^2\left(\Vert w^+\Vert+\Vert w^-\Vert\right)^2+\frac{q^2C}{4a}(t+s)^4\left(\Vert w^+\Vert+\Vert w^-\Vert\right)^4-\int_{\mathbb{R}^3}F(x,tw^++sw^-)dx.
\end{align}
It follows that
\begin{align}\label{4eq19}
\frac{\mu_w(t,s)}{(t+s)^4}&\leq \frac{1}{(t+s)^2}\left(\Vert w^+\Vert+\Vert w^-\Vert\right)^2+\frac{q^2C}{4a}\left(\Vert w^+\Vert+\Vert w^-\Vert\right)^4\nonumber\\ &-\int_{\mathbb{R}^d} \frac{F(x,tw^++sw^-)}{(t+s)^4}dx.
\end{align}
By assumption $(f_3)$ and the fact that $\text{supp}(w^+)\cap\text{supp}(w^-)=\emptyset$, we infer that
\begin{equation}\label{4eq20}
\lim\limits_{\vert (t,s)\vert\rightarrow +\infty}\frac{F(x,tw^++sw^-)}{(t+s)^4}=+\infty,\ \ \text{for a.a.}\ x\in\mathbb{R}^d.
\end{equation}
Using \eqref{4eq19} and \eqref{4eq20}, we deduce that
 $$\limsup\limits_{\vert (t,s)\vert\rightarrow +\infty} \mu_w(t,s)= -\infty.$$
Thus, by \eqref{4eq994}, the map $\mu_w(\cdot,\cdot)$ has  a global maximum $(t_{w^+},s_{w^-})\in (0,+\infty)\times (0,+\infty)$. Hence, $(t_{w^+},s_{w^-})$ is a critical point for $\mu_w(\cdot,\cdot)$, that is,

$$\langle J^{'}(t_{w^+}w^++s_{w^-}w^-),w^+\rangle=0=\langle J^{'}(t_{w^+}w^++s_{w^-}w^-),w^-\rangle.$$
By  Lemma \ref{4lem16} and the fact that $w\in \mathcal{M}$,
$$(t_{w^+},s_{w^-})=(1,1).$$
Therefore,
$$J(tw^++sw^-)\leq J(t_{w^+}w^++s_{w^-}w^-)= J(w),\ \text{for all}\ t,s>0.$$
This ends the proof.
\end{proof}
\begin{lemma}\label{lem5}
Assume that assumptions $(f_1)-(f_4)$  and $(V_0)$ hold. Let $\lbrace u_n\rbrace_{n\in\mathbb{N}}\subset \mathcal{M}$ such that $u_n\rightharpoonup u$ in $H^{1}_V(\mathbb{R}^3)$, then $u^\pm\not\equiv 0.$
\end{lemma}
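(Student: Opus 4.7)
The plan is to mirror the proof of Lemma~\ref{lem5+}, but applied separately to the positive and negative parts. Since $u_n \in \mathcal{M}$, by definition both $\langle J'(u_n), u_n^+\rangle = 0$ and $\langle J'(u_n), u_n^-\rangle = 0$. Using the pointwise identities $\nabla u_n \cdot \nabla u_n^\pm = |\nabla u_n^\pm|^2$, $u_n u_n^\pm = (u_n^\pm)^2$, and $f(x,u_n)u_n^\pm = f(x,u_n^\pm)u_n^\pm$ (the last one relies on $f(x,0)=0$), together with the nonnegativity of $\phi_{u_n}$ from Lemma~\ref{lem1}-(2), I would derive
\begin{equation*}
\|u_n^\pm\|^2 \;\leq\; \|u_n^\pm\|^2 + q^2\int_{\mathbb{R}^3}\phi_{u_n}(u_n^\pm)^2\,dx \;=\; \int_{\mathbb{R}^3} f(x,u_n^\pm)u_n^\pm\,dx.
\end{equation*}

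Next, as in \eqref{8eq999}, the growth conditions $(f_1)$--$(f_2)$ yield, for any $\varepsilon>0$, a constant $C_\varepsilon>0$ with $|f(x,t)t|\leq \varepsilon|t|^2+C_\varepsilon|t|^6$. Combining this with the Sobolev-type embeddings from Theorem~\ref{thm3} applied to $u_n^\pm \in H^1_V(\mathbb{R}^3)$, I obtain
\begin{equation*}
\|u_n^\pm\|^2 \;\leq\; \varepsilon C_1 \|u_n^\pm\|^2 + C_\varepsilon C_2 \|u_n^\pm\|^6.
\end{equation*}
Choosing $\varepsilon = 1/(2C_1)$ and rearranging produces a uniform lower bound
\begin{equation*}
\|u_n^\pm\| \;\geq\; \tau \;:=\; \left(\tfrac{1}{2C_\varepsilon C_2}\right)^{1/4} \;>\; 0, \qquad \text{for all } n\in\mathbb{N}.
\end{equation*}
Consequently, $\tau^2 \leq \int_{\mathbb{R}^3} f(x,u_n^\pm)u_n^\pm\,dx$ for every $n$.

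Finally, since $u_n\rightharpoonup u$ in $H^1_V(\mathbb{R}^3)$, Theorem~\ref{thm3} gives $u_n\to u$ strongly in $L^r(\mathbb{R}^3)$ for $r\in[2,6)$, and standard continuity of the maps $u\mapsto u^\pm$ on $H^1_V(\mathbb{R}^3)$ yields $u_n^\pm \rightharpoonup u^\pm$ in $H^1_V(\mathbb{R}^3)$ (with strong $L^r$ convergence). Then Lemma~\ref{9lem1}-(4) gives
\begin{equation*}
\int_{\mathbb{R}^3} f(x,u_n^\pm)u_n^\pm\,dx \;\longrightarrow\; \int_{\mathbb{R}^3} f(x,u^\pm)u^\pm\,dx,
\end{equation*}
so passing to the limit $n\to +\infty$ in the previous inequality yields $\int_{\mathbb{R}^3} f(x,u^\pm)u^\pm\,dx \geq \tau^2 > 0$, and therefore $u^\pm \not\equiv 0$.

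The main obstacle is simply verifying that $\langle J'(u_n), u_n^\pm\rangle = 0$ really does collapse to an inequality on $\|u_n^\pm\|$ alone — this is the reason the positivity $\phi_{u_n}\geq 0$ from Lemma~\ref{lem1}-(2) is critical, since it lets us drop the cross term $q^2\int \phi_{u_n}(u_n^\pm)^2\,dx$ without losing the sign. Once that step is in hand, the argument reduces to the same Sobolev-embedding bootstrap used in Lemma~\ref{lem5+}, now carried out separately for the positive and negative parts.
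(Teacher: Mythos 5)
Your proposal is correct and follows essentially the same route as the paper's own proof: reduce $\langle J'(u_n),u_n^\pm\rangle=0$ to $\Vert u_n^\pm\Vert^2\leq\int_{\mathbb{R}^3}f(x,u_n^\pm)u_n^\pm\,dx$ using $\phi_{u_n}\geq 0$, derive the uniform lower bound $\Vert u_n^\pm\Vert\geq\tau$ via the $(f_1)$--$(f_2)$ growth estimate and Theorem~\ref{thm3}, and pass to the limit with Lemma~\ref{9lem1}-(4) to conclude $\int_{\mathbb{R}^3}f(x,u^\pm)u^\pm\,dx\geq\tau^2>0$. The only difference is that you spell out the pointwise identities behind the first step, which the paper leaves implicit.
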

\begin{proof}
\textbf{Claim}: There is $\tau>0$ such that
\begin{equation}\label{7eq88+1}
    \Vert v^\pm\Vert\geq \tau,\ \ \text{for all}\ v\in \mathcal{M}.
\end{equation}
Indeed: Let $v\in\mathcal{M}$. Then,
\begin{align*}
  \int_{\mathbb{R}^3}\vert\nabla v^{\pm}\vert^2 dx +\int_{\mathbb{R}^3} V(x)(v^{\pm})^2dx+q^2\int_{\mathbb{R}^3} \phi_v (v^{\pm})^2dx=\int_{\mathbb{R}^3}f(x,v^{\pm})v^{\pm}dx.
\end{align*}
It follows, by Lemma \ref{lem1}, that
\begin{align}\label{8eq99}
    \Vert v^{\pm}\Vert^2\leq \int_{\mathbb{R}^3}f(x,v^{\pm})v^{\pm}dx.
\end{align}
Using Theorem \ref{thm3}, \eqref{8eq999} and \eqref{8eq99}, we find that
\begin{align*}
   \Vert v^{\pm}\Vert^2 &\leq \varepsilon\int_{\mathbb{R}^3} \vert v^\pm\vert^2dx+C_\varepsilon\int_{\mathbb{R}^3} \vert v^\pm\vert^6dx\\
    & \leq \varepsilon C_1 \Vert v^{\pm}\Vert^2+ C_\varepsilon C_2 \Vert v^{\pm}\Vert^6,\ \text{for some constants}\ C_1,C_2>0.
\end{align*}
 Choosing $\varepsilon=\frac{1}{2C_1}$ in the previous inequality, we get
$$\frac{1}{2}\Vert v^\pm\Vert^2\leq C_\varepsilon C_2 \Vert v^{\pm}\Vert^6.$$
Thus,
\begin{equation}\label{eq7715}
    \tau\leq \Vert v^\pm\Vert,\ \text{where}\ \tau=\left(\frac{1}{2C_{\varepsilon}C_2}\right)^{\frac{1}{4}}.
 \end{equation}
This ends the proof of  Claim.\\
Since $\lbrace u_n\rbrace_{n\in\mathbb{N}}\subset \mathcal{M}$, by \eqref{8eq99} and \eqref{eq7715}, we have
$$\tau^2\leq \int_{\mathbb{R}^3}f(x,u_n^\pm)u_n^\pm dx,\ \ \text{for all}\ \  n\in \mathbb{N}.$$
Hence, passing to the limit as $n\rightarrow +\infty$ in the previous inequality, we see that
$$ \tau^{2}\leq \int_{\mathbb{R}^3}f(x,u^\pm)u^\pm dx.$$
Therefore, $u^\pm\not\equiv 0.$ This completes the proof.
\end{proof}

The following proposition proves that the infimum of $J$ is attained on $\mathcal{M}$:
\begin{prop}\label{4prop15}
Assume that assumptions $(f_1)-(f_2)$ and $(V_0)$ hold. Then, there exists $\widehat{w}\in\mathcal{M}$ such that $J(\widehat{w})=c_1$.
\end{prop}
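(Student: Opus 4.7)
My plan is to follow the minimizing-sequence scheme used in Proposition~\ref{4prop5}, adapted to the nodal constraint. Let $\{w_n\}_{n\in\mathbb{N}} \subset \mathcal{M}$ satisfy $J(w_n)\to c_1$. For boundedness, exploit $\mathcal{M}\subset\mathcal{N}$: from the identity
\begin{equation*}
J(w_n) \;=\; J(w_n) - \tfrac{1}{4}\langle J'(w_n),w_n\rangle \;=\; \tfrac{1}{4}\|w_n\|^2 + \tfrac{1}{4}\!\int_{\mathbb{R}^3}\!\bigl[f(x,w_n)w_n - 4F(x,w_n)\bigr]dx,
\end{equation*}
together with the monotonicity property \eqref{2} (which yields $f(x,t)t-4F(x,t)\ge 0$ for all $t\in\mathbb{R}$), we read off $\|w_n\|^2 \le 4J(w_n) = 4c_1+o(1)$. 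Alternatively, the normalisation scheme of Proposition~\ref{4prop5} goes through verbatim, replacing the fibering map $h_u$ by $\mu_w$ from Lemma~\ref{4lem16}, since on $\{v^+=v^-\equiv 0\}$ one sends $\alpha=\beta\to+\infty$ to contradict $c_1<+\infty$, while on $\{v^+\not\equiv 0\}$ (or analogously for $v^-$) Fatou's lemma and $(f_3)$ drive $J(w_n)/\|w_n\|^4\to-\infty$.

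Passing to a subsequence, $w_n\rightharpoonup \widehat{w}_0$ in $H_V^1(\mathbb{R}^3)$ and $w_n\to\widehat{w}_0$ a.e., so $w_n^{\pm}\rightharpoonup \widehat{w}_0^{\pm}$. Lemma~\ref{lem5} then guarantees $\widehat{w}_0^{\pm}\not\equiv 0$, and Lemma~\ref{4lem11} produces a unique pair $(t_0,s_0)\in(0,+\infty)\times(0,+\infty)$ such that $\widehat{w}:=t_0\widehat{w}_0^+ + s_0\widehat{w}_0^-\in\mathcal{M}$; in particular $J(\widehat{w})\ge c_1$. For the reverse inequality, apply Lemma~\ref{4lem16} to each $w_n\in\mathcal{M}$ with the same $(t_0,s_0)$ to get $J(w_n)\ge J(t_0 w_n^+ + s_0 w_n^-)$, and then pass to the liminf using (i) weak lower semicontinuity of $\|\cdot\|^2$ together with the disjoint-support decomposition $\|t w^{+}+s w^{-}\|^2 = t^2\|w^+\|^2 + s^2\|w^-\|^2$, (ii) Lemma~\ref{lem1}(8) for the convergence of the nonlocal term along the weakly convergent sequence $t_0 w_n^+ + s_0 w_n^-\rightharpoonup\widehat{w}$, and (iii) Lemma~\ref{9lem1} for $\int F(x,t_0 w_n^+) + \int F(x,s_0 w_n^-)$. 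This gives
\begin{equation*}
c_1 \;=\; \lim_{n\to\infty} J(w_n) \;\ge\; \liminf_{n\to\infty} J(t_0 w_n^+ + s_0 w_n^-) \;\ge\; J(\widehat{w}) \;\ge\; c_1,
\end{equation*}
so $J(\widehat{w})=c_1$, completing the proof.

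The main obstacle is the non-vanishing of $\widehat{w}_0^{\pm}$ under weak convergence: without it, the projection onto $\mathcal{M}$ supplied by Lemma~\ref{4lem11} is unavailable and the minimising scheme collapses to the one-signed case. This delicate point is exactly what Lemma~\ref{lem5} delivers, through the uniform lower bound $\|v^{\pm}\|\ge \tau>0$ on $\mathcal{M}$ (derived from the $L^r$-compact embedding of Theorem~\ref{thm3} and the subcritical growth of $f$). Everything else reduces to a routine application of the fibering calculus on $\mathcal{M}$ and the weak-continuity properties established in Lemmas~\ref{lem1}(8) and~\ref{9lem1}.
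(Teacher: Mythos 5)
Your proof is correct and follows the same overall scheme as the paper's: minimizing sequence, boundedness, weak limit $\widehat{w}_0$ with $\widehat{w}_0^{\pm}\not\equiv 0$ by Lemma \ref{lem5}, projection onto $\mathcal{M}$ by Lemma \ref{4lem11}, and the squeeze $c_1\ge\liminf_n J(t_0w_n^++s_0w_n^-)\ge J(\widehat{w})\ge c_1$ via Lemma \ref{4lem16} and the weak-continuity facts. Two points where you diverge. First, your boundedness argument is simpler than the paper's: since $f(x,t)t-4F(x,t)\ge 0$ (a consequence of \eqref{2} together with $f(x,0)=F(x,0)=0$), the identity $J(w_n)=J(w_n)-\frac14\langle J'(w_n),w_n\rangle\ge\frac14\Vert w_n\Vert^2$ gives boundedness in one line, whereas the paper repeats the contradiction argument of Proposition \ref{4prop5} with normalized sequences and Fatou's lemma. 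Second, you stop at $\widehat{w}=t_0\widehat{w}_0^++s_0\widehat{w}_0^-$, while the paper goes on to prove $t_0=s_0=1$ (its two-step ``Claim,'' using the monotonicity properties \eqref{1} and \eqref{2}), so that the minimizer is the weak limit itself. For the statement as written your shorter route suffices --- all that Proposition \ref{4prop14} needs later is \emph{some} minimizer of $J$ on $\mathcal{M}$ --- so bypassing the identification $t_0=s_0=1$ costs nothing, even though it is the main technical content of the paper's version. One small point you should make explicit: $u\mapsto u^{\pm}$ is not linear, so $w_n\rightharpoonup\widehat{w}_0$ does not by itself give $w_n^{\pm}\rightharpoonup\widehat{w}_0^{\pm}$; you need to pass to a further subsequence, using boundedness of $\Vert w_n^{\pm}\Vert$ and the a.e.\ convergence $w_n^{\pm}\to\widehat{w}_0^{\pm}$ to identify the weak limit (the paper glosses over the same point).
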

\begin{proof}
First observe that from Remark \ref{rem4}, there exists a sequence $\lbrace w_n\rbrace_{n\in\mathbb{N}}\subset \mathcal{M}$ such that
$$J(w_n)\longrightarrow c_1\ \ \text{as}\ \ n\longrightarrow +\infty.$$
Arguing as in the proof of Proposition \ref{4prop5}, we deduce that $\lbrace w_n\rbrace_{n\in\mathbb{N}}$ is  bounded in $H^{1}_V(\mathbb{R}^3)$.
Thus, up to a subsequence, still denoted by $w_n$, there exists $\widehat{w}\in H^{1}_V(\mathbb{R}^3)$ such that
\begin{equation}\label{4eq22}
  \left\lbrace\begin{array}{ll}
   w_n\rightharpoonup \widehat{w}\ &\text{in}\ H^{1}_V(\mathbb{R}^3),\\
   w_n(x)\rightarrow \widehat{w}(x),\  &\text{for a.a.}\ x\in\mathbb{R}^3 \\
  \text{and} & \ \\
  w_n^\pm(x)\rightarrow \widehat{w}^\pm(x), & \text{for a.a.}\ x\in\mathbb{R}^3.
   \end{array}
  \right.
\end{equation}
According to Lemma  \ref{lem5}, we see that
$\widehat{w}^\pm \not\equiv 0$. It follows, by Lemma \ref{4lem11}, that there exists a unique pair $(t_{\widehat{w}^+},s_{\widehat{w}^-})\in (0,+\infty)\times(0,+\infty)$ such that
\begin{equation}\label{4eq99}
t_{\widehat{w}^+}\widehat{w}^{+}+s_{\widehat{w}^-}\widehat{w}^{-}\in \mathcal{M}.
\end{equation}
Namely,
\begin{equation}\label{4eq13}
\langle J^{'}(t_{\widehat{w}^+}\widehat{w}^{+}+s_{\widehat{w}^-}\widehat{w}^{-}),\widehat{w}^+\rangle=0= \langle J^{'}(t_{\widehat{w}^+}\widehat{w}^{+}+s_{\widehat{w}^-}\widehat{w}^{-}),\widehat{w}^-\rangle.
\end{equation}
Since $\lbrace w_n\rbrace_{n\in\mathbb{N}}\subset \mathcal{M}$, using \eqref{4eq22}, \eqref{4eq99}, Lemma \ref{4lem16}, and Fatou's lemma, we obtain
\begin{align}\label{4eq23}
c_1&=\lim_{n\rightarrow +\infty}J(w_n)\\
& \geq \liminf_{n\rightarrow +\infty}J(t_{\widehat{w}^+}w^+_n+s_{\widehat{w}^-}w^-_n)\nonumber\\
& \geq J(t_{\widehat{w}^+}\widehat{w}^++s_{\widehat{w}^-}\widehat{w}^-)\nonumber\\
& \geq c_1.
\end{align}
Therefore,
\begin{equation}\label{4eq24}
 c_1=\inf\limits_{\mathcal{M}}J=J(t_{\widehat{w}^+}\widehat{w}^++s_{\widehat{w}^-}\widehat{w}^-).
\end{equation}
 \textbf{Claim:} $t_{\widehat{w}^+}=s_{\widehat{w}^-}=1$. Indeed, we divide the proof of claim into two steps.\\
 \textbf{Step 1:} $0<t_{\widehat{w}^+},s_{\widehat{w}^-}\leq 1$. In fact, using \eqref{4eq22}, Lemma \ref{lem1}, and Fatou's lemma, we find that
 \begin{align}\label{4eq150}
\int_{\mathbb{R}^3}\vert\nabla \widehat{w}^{\pm}\vert^2 dx +\int_{\mathbb{R}^3} V(x)(\widehat{w}^{\pm})^2dx+q^2\int_{\mathbb{R}^3} \phi_{\widehat{w}} (\widehat{w}^{\pm})^2dx\leq \int_{\mathbb{R}^3}f(x,\widehat{w}^\pm)\widehat{w}^\pm dx.
 \end{align}
By \eqref{4eq13} and Lemma \ref{lem1}, we infer
 \begin{align}\label{4eq200}
\int_{\mathbb{R}^{3}}f(x,t_{\widehat{w}^+}\widehat{w}^{+})t_{\widehat{w}^+}\widehat{w}^{+}\ dx & =t_{\widehat{w}^+}^2\left[\int_{\mathbb{R}^3}\vert\nabla \widehat{w}^{+}\vert^2 dx +\int_{\mathbb{R}^3} V(x)(\widehat{w}^{+})^2dx\right]\nonumber\\
&+q^2t_{\widehat{w}^+}^4\int_{\mathbb{R}^3} \phi_{\widehat{w}^+}(x) (\widehat{w}^{+})^2dx+q^2t_{\widehat{w}^+}^2s_{\widehat{w}^-}^2\int_{\mathbb{R}^3} \phi_{\widehat{w}^-} (x)(\widehat{w}^{+})^2dx.
 \end{align}
Without loss of generality, we may assume that $t_{\widehat{w}^+}\geq s_{\widehat{w}^-}$. By \eqref{4eq200},  it yields that
 \begin{align}\label{8eq55}
\int_{\mathbb{R}^{3}}\frac{f(x,t_{\widehat{w}^+}\widehat{w}^{+})}{\vert t_{\widehat{w}^+}\widehat{w}^{+}\vert^3}\vert\widehat{w}^{+}\vert^3\widehat{w}^{+}\ dx
  &\leq \frac{1}{t_{\widehat{w}^+}^2}\left[\int_{\mathbb{R}^3}\vert\nabla \widehat{w}^{+}\vert^2 dx +\int_{\mathbb{R}^3} V(x)(\widehat{w}^{+})^2dx\right]\nonumber\\
&+q^2\int_{\mathbb{R}^3} \phi_{\widehat{w}} (x)(\widehat{w}^{+})^2dx.
 \end{align}
Arguing by contradiction,  suppose that $t_{\widehat{w}^+}>1$.
 Subtracting  \eqref{4eq150} from \eqref{8eq55}, and using \eqref{1}, we obtain
 \begin{align*}
0  & \leq\int_{\mathbb{R}^{d}}\left[ \frac{f(x,t_{\widehat{w}^+}\widehat{w}^{+})}{ \vert t_{\widehat{w}^+}\widehat{w}^{+}\vert^{3}}-\frac{f(x,\widehat{w}^{+})}{ \vert\widehat{w}^{+}\vert^{3}}\right]\vert \widehat{w}^+\vert^3 \widehat{w}^+dx\\
& \leq \left(\frac{1}{t_{\widehat{w}^+}^2}-1\right)\left[\int_{\mathbb{R}^3}\vert\nabla \widehat{w}^{+}\vert^2 dx +\int_{\mathbb{R}^3} V(x)(\widehat{w}^{+})^2dx\right]\\
& <0.
 \end{align*}
Thus,  a contradiction holds. Therefore, $0<t_{\widehat{w}^+},s_{\widehat{w}^-} \leq 1.$\\
\textbf{Step 2:} $t_{\widehat{w}^+}=s_{\widehat{w}^-}=1$. Indeed, we argue by contradiction, suppose that $(t_{\widehat{w}^+},s_{\widehat{w}^-})\neq (1,1).$\\
It follows, By \eqref{4eq99}, Lemma \ref{9lem1},  and Fatou's lemma, that
\begin{align*}
c_1 &\leq J(t_{\widehat{w}^+}\widehat{w}^++s_{\widehat{w}^-}\widehat{w}^-)\nonumber\\ &=J(t_{\widehat{w}^+}\widehat{w}^++s_{\widehat{w}^-}\widehat{w}^-)-\frac{1}{4}\langle J^{'}(t_{\widehat{w}^+}\widehat{w}^++s_{\widehat{w}^-}\widehat{w}^-),t_{\widehat{w}^+}\widehat{w}^++s_{\widehat{w}^-}\widehat{w}^-\rangle\nonumber\\
& < J(\widehat{w})-\frac{1}{4}\langle J^{'}(\widehat{w}),\widehat{w}\rangle\nonumber\\
& \leq \liminf_{n\rightarrow +\infty}\left[  J(w_n)-\frac{1}{4}\langle J^{'}(w_n),w_n\rangle\right] \nonumber\\
&=\liminf_{n\rightarrow +\infty} J(w_n)\ \ (\text{since}\ w_n\in \mathcal{M})\nonumber\\
& = c_1.
\end{align*}
Thus,  a contradiction holds. Hence, $t_{\widehat{w}^+}=s_{\widehat{w}^-}=1$. According to \eqref{4eq24}, it comes that
$$c_1=\inf\limits_{\mathcal{M}}J=J(t_{\widehat{w}^+}\widehat{w}^++s_{\widehat{w}^-}\widehat{w}^-)=J(\widehat{w}).$$
This ends the proof.
\end{proof}
\begin{prop}\label{4prop14}
Under the assumptions $(f_1)-(f_4)$ and  $(V_0)$, $\widehat{w}$ is a critical point for the functional $J$.
\end{prop}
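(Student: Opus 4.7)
The plan is to follow the non-smooth Lagrange multiplier strategy deployed in the proof of Proposition \ref{4prop9}, but now with a two-dimensional multiplier reflecting the two constraints defining $\mathcal{M}$. Introduce the auxiliary functionals $\xi_1,\xi_2:H^1_V(\mathbb{R}^3)\to\mathbb{R}$ by
$$\xi_1(u):=\langle J'(u),u^+\rangle,\qquad \xi_2(u):=\langle J'(u),u^-\rangle.$$
As in Proposition \ref{4prop9}, assumptions $(f_1)$--$(f_4)$ combined with Clarke's calculus imply that both $\xi_1$ and $\xi_2$ are locally Lipschitz, and an explicit formula for arbitrary elements of $\partial\xi_i(u)$ analogous to \eqref{4eq500} can be written down. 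Since $\widehat{w}$ realizes
$$c_1=\inf\{J(u):\,\xi_1(u)=0=\xi_2(u),\ u^\pm\not\equiv 0\},$$
the non-smooth multiplier rule of Clarke produces $\lambda_1,\lambda_2\in\mathbb{R}$ such that
$$0\in\partial J(\widehat{w})+\lambda_1\partial\xi_1(\widehat{w})+\lambda_2\partial\xi_2(\widehat{w}).$$

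Testing this inclusion separately with $\widehat{w}^+$ and $\widehat{w}^-$, the contribution coming from $\partial J(\widehat{w})$ vanishes (because $\widehat{w}\in\mathcal{M}$) and one is left with a homogeneous $2\times 2$ linear system in $(\lambda_1,\lambda_2)$. The coefficients $a_{ij}$ can be made explicit by invoking $\mathrm{Supp}(\widehat{w}^+)\cap\mathrm{Supp}(\widehat{w}^-)=\emptyset$ (so that the kinetic, potential, and reaction terms contribute only on the diagonal) together with the decomposition $\phi_{\widehat{w}}=\phi_{\widehat{w}^+}+\phi_{\widehat{w}^-}$ (which isolates the nonlocal cross terms). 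A direct computation paralleling \eqref{4eq12} then gives, for $s(1)=+$ and $s(2)=-$,
$$a_{ii}=-2\|\widehat{w}^{s(i)}\|^2+\int_{\mathbb{R}^3}\bigl(3f(x,\widehat{w}^{s(i)})\widehat{w}^{s(i)}-f^*(x,\widehat{w}^{s(i)})(\widehat{w}^{s(i)})^2\bigr)\,dx<0$$
by $(f_4)$, while the off-diagonal terms are $a_{12}=a_{21}=4q^2\int_{\mathbb{R}^3}\phi_{\widehat{w}^-}(\widehat{w}^+)^2\,dx\ge 0$.

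The main obstacle is verifying that this matrix is non-singular, so that the only solution is $\lambda_1=\lambda_2=0$. A sign analysis handles most cases directly: if $\lambda_1$ and $\lambda_2$ had opposite signs, each equation would be the sum of a strictly negative term and a non-positive one, precluding a zero sum; if they had the same nonzero sign, the system would force $a_{11}a_{22}\le a_{12}a_{21}$, which one can exclude by combining the strict gap in $(f_4)$ with the component Nehari identities
$$\|\widehat{w}^\pm\|^2+q^2\int_{\mathbb{R}^3}\phi_{\widehat{w}}(\widehat{w}^\pm)^2\,dx=\int_{\mathbb{R}^3}f(x,\widehat{w}^\pm)\widehat{w}^\pm\,dx$$
to absorb the off-diagonal contribution into the diagonal. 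Once $\lambda_1=\lambda_2=0$ is established, the Clarke inclusion collapses to $J'(\widehat{w})=0$ in $(H^1_V(\mathbb{R}^3))^*$, so $\widehat{w}$ is a weak solution of \eqref{P} and, in particular, a critical point of $J$.
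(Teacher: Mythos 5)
Your proposal is correct in substance and follows the same non-smooth Lagrange multiplier framework as the paper, but the step where the multipliers are killed is genuinely different. The paper's proof takes $\lambda_+,\lambda_-\geq 0$ from Clarke's rule, tests the identity $0=J'(\widehat{w})+\lambda_+\xi^*_{\widehat{w}^+}+\lambda_-\xi^*_{\widehat{w}^-}$ against the single function $\widehat{w}$, and computes $\langle\xi^*_{\widehat{w}^\pm},\widehat{w}\rangle=-2\Vert\widehat{w}^\pm\Vert^2+\int_{\mathbb{R}^3}\bigl(3f(x,\widehat{w}^\pm)\widehat{w}^\pm-f^*(x,\widehat{w}^\pm)(\widehat{w}^\pm)^2\bigr)dx<0$ by $(f_4)$; since both coefficients are strictly negative and both multipliers are non-negative, the single scalar equation $\lambda_+\langle\xi^*_{\widehat{w}^+},\widehat{w}\rangle+\lambda_-\langle\xi^*_{\widehat{w}^-},\widehat{w}\rangle=0$ forces $\lambda_\pm=0$ at once. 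You instead allow $\lambda_1,\lambda_2\in\mathbb{R}$, test against $\widehat{w}^+$ and $\widehat{w}^-$ separately, and argue that the resulting $2\times 2$ homogeneous system has a non-singular matrix. That route works and is arguably more robust (for equality constraints the sign restriction on the multipliers is the delicate point of Clarke's rule, and you do not need it), at the cost of the determinant analysis. Two small corrections to your coefficients: the quantity $-2\Vert\widehat{w}^{s(i)}\Vert^2+\int(3f\cdot-f^*\cdot)$ is the value of $\langle\xi^*_i,\widehat{w}\rangle$, not of the diagonal entry $\langle\xi^*_i,\widehat{w}^{s(i)}\rangle$; the latter equals it minus the (non-negative) cross term, so it is still strictly negative, which is all you need. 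And the off-diagonal entry carries a factor $2q^2$ rather than $4q^2$, coming only from the bilinear part of the variation of $u\mapsto\phi_u$. Your non-singularity claim can be made fully rigorous by observing strict diagonal dominance: $\vert M_{ii}\vert=\vert\langle\xi^*_i,\widehat{w}\rangle\vert+M_{ji}>M_{ji}\geq 0$ for $j\neq i$, whence $\det M=\vert M_{11}\vert\vert M_{22}\vert-M_{12}M_{21}>0$; this is exactly the "absorption" you allude to, and it is cleaner than case-splitting on the signs of $\lambda_1,\lambda_2$.
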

\begin{proof}[Proof]
We consider the functionals $\xi_\pm:H^{1}_V(\mathbb{R}^3)\rightarrow \mathbb{R}$ defined by
\begin{align*}
\xi_\pm(w)&: =\langle J^{'}(w),w^\pm\rangle \\
&=\int_{\mathbb{R}^3}\vert\nabla w^{\pm}\vert^2 dx +\int_{\mathbb{R}^3} V(x)(w^{\pm})^2dx+q^2\int_{\mathbb{R}^3} \phi_w(x) (w^{\pm})^2dx-\int_{\mathbb{R}^3}f(x,w^{\pm})w^{\pm}dx.
\end{align*}
It's clear that $\xi_\pm$  is locally Lipschitz ( see \cite[Theorem 2.7.2, p. 221]{419} ).\\
Let $w\in H^{1}_V(\mathbb{R}^3)$. For all $\xi^*_{w^\pm}\in \partial \xi_\pm(w)$, there is $f^*(x,w^\pm)\in \partial_{w^\pm} f(x,w^\pm)$ such that
\begin{align}\label{4eq400}
\langle\xi_{w^\pm}^{*},v\rangle &  =2\int_{\mathbb{R}^3}\nabla w^{\pm}\nabla v^\pm dx +2\int_{\mathbb{R}^3} V(x)w^{\pm}v^\pm dx+2q^2\int_{\mathbb{R}^3} \phi_w w^{\pm}v^\pm dx-\int_{\mathbb{R}^3}f(x,w^{\pm})v^{\pm}dx\nonumber\\
&+2q^2\int_{\mathbb{R}^3}\int_{\mathbb{R}^3}\frac{1-e^{-\frac{\vert y-x\vert}{a}}
   }{\vert y-x\vert}w(y)v(y)w^\pm(x)^2dydx -\int_{\mathbb{R}^3}f^*(x,w^{\pm})w^{\pm}v^\pm dx,
\end{align}
for all $v\in H^{1}_V(\mathbb{R}^3)$.\\
By Proposition \ref{4prop15},
$$J(\widehat{w})=c_1=\inf\left\lbrace J(w):\ w\in H^{1}_V(\mathbb{R}^3)\setminus\lbrace  0\rbrace,\ w^\pm\not\equiv 0,\ \xi_+(w)=0=\xi_-(w) \right\rbrace.$$
According to the non-smooth multiplier rule of Clarke \cite[Theorem 10.47, p. 221]{418}, there exist $\lambda_+,\lambda_-\geq 0$ such that
$$
0\in\partial(J+ \lambda_+ \xi_++\lambda_- \xi_-)(\widehat{w}).
$$
The subdifferential calculus of Clarke \cite[p. 48]{419}, gives that

$$0\in\partial J(\widehat{w})+\lambda_+ \partial\xi_+(\widehat{w})+\lambda_- \partial\xi_-(\widehat{w}).$$
Then,
\begin{equation}\label{44eq11}
0=J^{'}(\widehat{u})+\lambda_+ \xi_{w^+}^*+\lambda_- \xi_{w^-}^*\ \ \text{in}\ \left(H^{1}_V(\mathbb{R}^3)\right)^*,\ \text{for all}\ \xi^*_{\widehat{w}^+}\in \partial\xi_+(\widehat{w})\ \text{and all}\ \xi^*_{\widehat{w}^-}\in \partial\xi_-(\widehat{w}).
\end{equation}
 Since $\widehat{w}\in \mathcal{M}$,
 \begin{equation}\label{44eq10}
0=\langle J^{'}(\widehat{w}),\widehat{w}\rangle+\lambda_+\langle \xi^{*}_{\widehat{w}^+},\widehat{w}\rangle+\lambda_-\langle \xi^{*}_{\widehat{w}^-},\widehat{w}\rangle =\lambda_+\langle \xi^{*}_{\widehat{w}^+},\widehat{w}\rangle+\lambda_-\langle \xi^{*}_{\widehat{w}^-},\widehat{w}\rangle,
\end{equation}
for all $\xi^*_{\widehat{w}^+}\in \partial\xi_+(\widehat{w})$ and all $ \xi^*_{\widehat{w}^-}\in \partial\xi_-(\widehat{w}).$\\
Using \eqref{4eq400} and the fact that $\widehat{w}\in\mathcal{M}$, we obtain
\begin{align}\label{4eq27}
\langle\xi_{\widehat{w}^\pm}^{*},\widehat{w}\rangle &  =2\int_{\mathbb{R}^3}\vert\nabla \widehat{w}^{\pm}\vert^2 dx +2\int_{\mathbb{R}^3} V(x)(\widehat{w}^{\pm})^2 dx+4q^2\int_{\mathbb{R}^3} \phi_{\widehat{w}}(x) (\widehat{w}^{\pm})^2 dx\nonumber\\
&-\int_{\mathbb{R}^3}f(x,\widehat{w}^{\pm})\widehat{w}^{\pm}dx-\int_{\mathbb{R}^3}f^*(x,\widehat{w}^{\pm})(\widehat{w}^{\pm})^2dx\nonumber\\
& =-2\Vert \widehat{w}^\pm\Vert^2+\int_{\mathbb{R}^{3}} 3f(x,\widehat{w}^{\pm})\widehat{w}^{\pm}- f^{*}(x,\widehat{w}^{\pm})(\widehat{w}^{\pm})^{2}dx.
\end{align}
It follows, by assumption $(f_4)$, that
\begin{equation}\label{4eq28}
\langle \xi^{*}_{\widehat{w}^+},\widehat{w}\rangle < 0\ \ \text{and}\ \ \langle \xi^{*}_{\widehat{w}^-},\widehat{w}\rangle <0.
\end{equation}
In light of \eqref{44eq10}, it comes that $\lambda_\pm=0.$ Therefore, from \eqref{44eq11}, we conclude that $\widehat{w}$  is a critical point of the functional $J$.\\
This completes the proof.
\end{proof}
\begin{prop}\label{4prop30}
Assume that assumptions $(f_1)-(f_4)$ and  $(V_0)$  hold. Then, the ground state solution $\widehat{u}$ of problem \eqref{P} with  a fixed sign. Moreover, $$2c_0=2J(\widehat{u})=2\inf\limits_{\mathcal{N}}J<\inf\limits_{\mathcal{M}}J=J(\widehat{w})=c_1.$$
\end{prop}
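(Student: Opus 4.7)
The plan is to first establish the strict comparison $2c_0 < c_1$ of the two minimum levels, and then deduce that the ground state $\widehat{u}$ cannot be sign-changing by a short contradiction argument. Start with the nodal minimizer $\widehat{w} \in \mathcal{M}$ produced in Proposition \ref{4prop15}, so that $\widehat{w}^+ \not\equiv 0$ and $\widehat{w}^- \not\equiv 0$. By Lemma \ref{lem3} applied separately to $\widehat{w}^+$ and to $\widehat{w}^-$, there exist unique $t_+, s_- > 0$ with $t_+ \widehat{w}^+ \in \mathcal{N}$ and $s_- \widehat{w}^- \in \mathcal{N}$, and hence
\[
J(t_+ \widehat{w}^+) \geq c_0, \qquad J(s_- \widehat{w}^-) \geq c_0.
\]

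The key computation is to evaluate $J(t_+ \widehat{w}^+ + s_- \widehat{w}^-)$. Since $\widehat{w}^+ \cdot \widehat{w}^- \equiv 0$, we have $(t_+ \widehat{w}^+ + s_- \widehat{w}^-)^2 = t_+^2(\widehat{w}^+)^2 + s_-^2(\widehat{w}^-)^2$, and linearity of the Bopp--Podolsky equation together with Lemma \ref{lem1}-(9) yields $\phi_{t_+ \widehat{w}^+ + s_- \widehat{w}^-} = t_+^2 \phi_{\widehat{w}^+} + s_-^2 \phi_{\widehat{w}^-}$. Expanding and using the symmetry $\int \phi_{\widehat{w}^+}(\widehat{w}^-)^2 dx = \int \phi_{\widehat{w}^-}(\widehat{w}^+)^2 dx$ (from the symmetric kernel in Lemma \ref{lem1}-(10)), I obtain the decomposition
\[
J(t_+ \widehat{w}^+ + s_- \widehat{w}^-) = J(t_+ \widehat{w}^+) + J(s_- \widehat{w}^-) + \frac{q^2}{2} t_+^2 s_-^2 \int_{\mathbb{R}^3} \phi_{\widehat{w}^+}(\widehat{w}^-)^2 \, dx.
\]
The cross term is strictly positive: the kernel $\mathcal{K}(x) = (1 - e^{-|x|/a})/|x|$ is strictly positive on $\mathbb{R}^3$, so $\phi_{\widehat{w}^+}(x) = \mathcal{K} * (\widehat{w}^+)^2(x) > 0$ everywhere, and $\widehat{w}^- \not\equiv 0$ forces the integral to be strictly positive.

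On the other side, since $\widehat{w} \in \mathcal{M}$, Lemma \ref{4lem16} applies and gives
\[
J(t_+ \widehat{w}^+ + s_- \widehat{w}^-) \leq J(\widehat{w}) = c_1.
\]
Chaining the three inequalities yields
\[
2 c_0 \leq J(t_+ \widehat{w}^+) + J(s_- \widehat{w}^-) < J(t_+ \widehat{w}^+ + s_- \widehat{w}^-) \leq c_1,
\]
which is precisely the strict inequality $2 c_0 < c_1$ claimed in the proposition.

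Finally, for the fixed sign of the ground state $\widehat{u}$, I argue by contradiction: suppose $\widehat{u}$ is sign-changing, so $\widehat{u}^\pm \not\equiv 0$. Proposition \ref{4prop9} asserts $J'(\widehat{u}) = 0$ in $(H_V^1(\mathbb{R}^3))^*$, hence $\langle J'(\widehat{u}), \widehat{u}^+ \rangle = \langle J'(\widehat{u}), \widehat{u}^- \rangle = 0$. Combined with $\widehat{u} \in \mathcal{N}$ and $\widehat{u}^\pm \not\equiv 0$, this means $\widehat{u} \in \mathcal{M}$, whence $c_1 \leq J(\widehat{u}) = c_0$. Since $c_0 > 0$ by Lemma \ref{lem6}, the inequality $2c_0 < c_1$ implies $c_0 < c_1$, contradicting $c_1 \leq c_0$. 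Thus $\widehat{u}$ has a fixed sign. The only delicate point in the argument is the clean splitting $\phi_{\widehat{w}^+ + \widehat{w}^-} = \phi_{\widehat{w}^+} + \phi_{\widehat{w}^-}$, which crucially relies on the disjoint supports of $\widehat{w}^+$ and $\widehat{w}^-$; the rest is bookkeeping with the already-established Lemmas \ref{lem3}, \ref{6lem4}, \ref{4lem11}, and \ref{4lem16}.
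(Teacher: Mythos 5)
Your proof is correct and follows essentially the same route as the paper: the same chain $2c_0 \leq J(t_+\widehat{w}^+)+J(s_-\widehat{w}^-) < J(t_+\widehat{w}^+ + s_-\widehat{w}^-) \leq J(\widehat{w}) = c_1$ via Lemmas \ref{lem3} and \ref{4lem16}, combined with the observation that a sign-changing critical ground state would lie in $\mathcal{M}$. You merely reorganize the logic (proving $2c_0<c_1$ unconditionally before the contradiction) and make explicit the positive cross term $\frac{q^2}{2}t_+^2 s_-^2\int_{\mathbb{R}^3}\phi_{\widehat{w}^+}(\widehat{w}^-)^2\,dx$ behind the strict middle inequality, which the paper leaves implicit.
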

\begin{proof}
We argue by contradiction. Suppose that $\widehat{u}^\pm\not\equiv 0$, then
\begin{equation}\label{4eq33}
c_0=\inf\limits_{\mathcal{N}}J\geq \inf\limits_{\mathcal{M}}J=c_1.
\end{equation}
Since $\mathcal{M}\subset \mathcal{N}$,
\begin{equation}\label{4eq32}
c_0=\inf\limits_{\mathcal{N}}J\leq \inf\limits_{\mathcal{M}}J=c_1.
\end{equation}
Combining \eqref{4eq33} and  \eqref{4eq32}, we get
\begin{equation}\label{4eq34}
c_0=J(\widehat{u})=\inf\limits_{\mathcal{N}}J= \inf\limits_{\mathcal{M}}J=J(\widehat{w})=c_1.
\end{equation}
On the other hand, since $\widehat{w}\in\mathcal{M}$, $\widehat{w}^\pm\not\equiv 0$. Then, from Lemma  \ref{lem3}, there is a unique pair $t_{\widehat{w}^+},s_{\widehat{w}^-}>0$ such that
 $$t_{\widehat{w}^+}\widehat{w}^+\in \mathcal{N}\ \text{and}\ s_{\widehat{w}^-}\widehat{w}^-\in \mathcal{N}.$$
By Lemma  \ref{4lem16}, it follows that
\begin{align}\label{4eq31}
2c_0 &\leq J(t_{\widehat{w}^+}\widehat{w}^+)+J(s_{\widehat{w}^-}\widehat{w}^-)<J(t_{\widehat{w}^+}\widehat{w}^++s_{\widehat{w}^-}\widehat{w}^-)\leq J(\widehat{w})=\inf\limits_{\mathcal{M}}J=c_1.
\end{align}
Which is a contradiction with \eqref{4eq34}. Therefore, $\widehat{u}$ has a fixed sign, and
$$2c_0=2J(\widehat{u})=2\inf\limits_{\mathcal{N}}J<\inf\limits_{\mathcal{M}}J=J(\widehat{w})=c_1.$$
Thus the proof.
\end{proof}
\begin{proof}[\textbf{Proof of Theorem \ref{thm2} :}]  Theorem \ref{thm2} deduced from the Propositions \ref{4prop14} and \ref{4prop30}.
\end{proof}

\end{document}